\renewenvironment{description}[1][0pt]
  {\list{}{\labelwidth=0pt \leftmargin=#1
   }}
  {\endlist}
\newcommand{\RR}{\mathbb R}
\newcommand{\NN}{\mathbb N}
\newcommand{\CC}{\mathbb C}
\newcommand{\QQ}{\mathbb Q}
\newcommand{\TT}{\mathbb T}
\newcommand{\fC}{C}
\newcommand{\fd}{d}
\newcommand{\fp}{c}
\newcommand{\fD}{D}
\newcommand{\cM}{\mathcal M}
\newcommand{\cK}{\mathscr K}
\newcommand{\cS}{\mathscr S}
\newcommand{\HH}{\mathbb H}
\newcommand{\im}{\mathrm{Im}}
\newcommand{\benu}{\begin{enumerate}}
\newcommand{\eenu}{\end{enumerate}}
\newcommand{\bop}{\begin{opomba}}
\newcommand{\eop}{\end{opomba}}
\newcommand{\Cl}{\text{Cl}}
\newcommand{\Pos}{\text{Pos}}
\newcommand{\beqn}{\begin{align*}}
\newcommand{\eeqn}{\end{align*}}
\newcommand{\bdefi}{\begin{definition}}
\newcommand{\edefi}{\end{definition}}
\newcommand{\bcor}{\begin{corollary}}
\newcommand{\ecor}{\end{corollary}}
\newcommand{\bthe}{\begin{theorem}}
\newcommand{\ethe}{\end{theorem}}
\newcommand{\bpro}{\begin{proposition}}
\newcommand{\epro}{\end{proposition}}
\newcommand{\blem}{\begin{lemma}}
\newcommand{\elem}{\end{lemma}}
\newcommand{\brem}{\begin{remark}}
\newcommand{\erem}{\end{remark}}
\newcommand{\bequ}{\begin{equation}}
\newcommand{\eequ}{\end{equation}}
\newcommand{\bprf}{\begin{proof}}
\newcommand{\eprf}{\end{proof}}
\newtheorem{theorem}{Theorem}[section]
\newtheorem{corollary}[theorem]{Corollary}
\newtheorem{lemma}[theorem]{Lemma}
\newtheorem{proposition}[theorem]{Proposition}
\newtheorem*{problem*}{Problem}
\newtheorem*{problem'*}{Problem'}
\newtheorem*{conjecture*}{Conjecture}
\theoremstyle{definition}
\newtheorem{definition}[theorem]{Definition}
\newtheorem{remark}[theorem]{Remark}
\newtheorem*{convention*}{Convention}
\begin{document}

%
%
%
%
%
%
%
%
%

\title{Matrix Fej\' er-Riesz Theorem with gaps}

\author{Alja\v z Zalar}
\ead{aljaz.zalar@imfm.si}

\address{%
Institute of Mathematics, Physics, and Mechanics, Jadranska 19, 1000
Ljubljana, Slovenia}

\date{\today}

\begin{abstract} 
	The matrix Fej\'er-Riesz theorem characterizes positive semidefinite matrix 
	polynomials on the real line $\RR$.
	We extend a characterization to arbitrary closed semialgebraic sets $K\subseteq \RR$ by the
	use of matrix preorderings from real algebraic geometry. 
	In the compact case a denominator-free characterization exists, while in the non-compact case there 
	are counterexamples. However, there is a weaker characterization with denominators in the non-compact 
	case.
	At the end we extend the results to algebraic curves.
\end{abstract}

\begin{keyword}
positive polynomials\sep matrix polynomials\sep preorderings\sep Nichtnegativstellensatz\sep
real algebraic geometry
\MSC 14P10\sep 13J30\sep 47A56
\end{keyword}

\maketitle

\section{Introduction}

\subsection{Motivation}



The matrix Fej\' er-Riesz theorem is the following result (For the proof see either of \cite{Gohberg-Krein}, \cite{Rosenblatt}, \cite{Jakubovic}, \cite{Djok}, \cite{Choi}, \cite{Rodman}, \cite{Drit}).

\begin{theorem} 
\label{zvezna}
	Let 
		$F(x)=\sum_{m=0}^{2N}F_m x^m$
	be a $n\times n$ matrix polynomial from $M_n(\CC[x])$ which is 
	positive semidefinite on $\RR$.
	Then there exists a matrix polynomial 
	$G(x)=\sum_{m=0}^N G_m x^m\in M_n(\CC[x])$ such that
		$F(x)=G(x)^\ast G(x)$
	where $G(x)^\ast=\sum_{m=0}^N G_m^{\ast} x^m=\sum_{m=0}^N \overline{G_m}^Tx^m=\overline{G(x)}^T$.
\end{theorem}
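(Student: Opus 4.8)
The plan is to settle the scalar case by hand, then produce a factorization with entries in $\CC(x)$, and finally clear denominators. \emph{Scalar case ($n=1$).} If $f\in\CC[x]$ is $\geq 0$ on $\RR$ then $f$ is real on $\RR$, hence $f\in\RR[x]$; its degree is even with positive leading coefficient, each real zero has even multiplicity (otherwise $f$ changes sign there), and non-real zeros occur in conjugate pairs. Collecting a ``half'' of the factorization — $\sqrt{f_{\deg f}}$, one factor from each real double zero, one factor from each conjugate pair — produces $g\in\CC[x]$ with $\deg g\leq N$ and $f=\overline{g}g=g^\ast g$. In particular a nonnegative real polynomial is a sum of two real squares.

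\emph{Two reductions.} In any polynomial identity $F=G^\ast G$ the coefficient of $x^{2\deg G}$ in $G^\ast G$ equals $G_{\deg G}^\ast G_{\deg G}$, which is nonzero whenever $G_{\deg G}\neq 0$; hence $2\deg G=\deg F\leq 2N$, so the degree bound $\deg G\leq N$ is automatic once \emph{any} polynomial factorization is found. Also one may assume $\det F\not\equiv 0$: apply the theorem to $F+\eps I\succeq\eps I$ (its determinant is $\geq\eps^n>0$ on $\RR$, hence $\not\equiv 0$), note $\|G_\eps(x)\|^2=\|F(x)+\eps I\|\leq\|F(x)\|+1$ on $\RR$ for $\eps\leq 1$, bound the coefficients of $G_\eps$ uniformly by interpolating at $N+1$ points and inverting a Vandermonde matrix, and pass to a subsequential limit as $\eps\downarrow 0$.

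\emph{Rational factorization.} If $\det F\not\equiv 0$ then $F(x)\succ 0$ off a finite subset of $\RR$, so all leading principal minors $\Delta_k$ of $F$ are nonzero polynomials and Gaussian elimination over $\CC(x)$ gives $F=LDL^\ast$ with $L$ unit lower triangular and $D=\diag(d_k)$, $d_k=\Delta_k/\Delta_{k-1}$. Each $d_k$ is rational, real on $\RR$ and positive there off a finite set, so writing $d_k=a_k/b_k$ with $a_k,b_k\in\RR[x]$ and applying the scalar case to $a_kb_k\geq 0$ yields $d_k=\overline{t_k}t_k$ with $t_k\in\CC(x)$; thus $F=G_0^\ast G_0$ with $G_0=\diag(t_k)L^\ast\in M_n(\CC(x))$. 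Since $\|G_0(x)\|^2=\|F(x)\|$ is locally bounded on $\RR$, all poles of $G_0$ are non-real.

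\emph{Clearing denominators — the crux.} It remains to replace $G_0$ by a polynomial matrix. I would remove the poles one at a time: at a pole $a$ of order $r$, let $P$ be the orthogonal projection onto the range of the leading Laurent coefficient of $G_0$ at $a$, and replace $G_0$ by $B_aG_0$, where $B_a(x)=I-P+\tfrac{x-a}{x-\overline a}P$ is rational and satisfies $B_a^\ast B_a=I$ on $\RR$, so that $(B_aG_0)^\ast(B_aG_0)=F$ is unchanged. Then the $(x-a)^{-r}$-term of $B_aG_0$ vanishes, and — the key point — expanding $F=G_0^\ast G_0$ near $a$ and using that $F$ is regular there forces a relation between the Laurent coefficients of $G_0$ at $a$ which makes $B_aG_0$ acquire no pole at $\overline a$; hence the total pole order of $G_0$ strictly decreases. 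Iterating (handling higher-order poles analogously) makes $G_0$ pole-free, i.e.\ equal to the desired $G$, with $\deg G\leq N$ automatically. This pole-removal bookkeeping — essentially the Potapov/Wiener--Hopf ``essential factorization'' — is the technical heart of the theorem; equivalently one may transplant the problem to the unit circle by a Cayley transform and invoke the matrix Fej\'er--Riesz theorem for trigonometric polynomials, whose proof runs along the same lines.
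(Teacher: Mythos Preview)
The paper does not give its own proof of Theorem~\ref{zvezna}; it quotes the result as classical and refers to several sources (Gohberg--Krein, Rosenblatt, Jakubovi\v c, Djokovi\'c, Choi--Lam--Reznick, Gohberg--Lancaster--Rodman, Dritschel). So there is nothing in the paper to compare your argument against.

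That said, your outline is a correct version of one of the standard proofs. The scalar case and the two reductions are fine. In the rational $LDL^\ast$ step you implicitly use that $F$, being hermitian with $\overline{x}=x$, has real leading principal minors $\Delta_k\in\RR[x]$, which is why $d_k=\Delta_k/\Delta_{k-1}$ is a quotient of real polynomials; this is true and worth stating. The Blaschke--Potapov pole removal is exactly the mechanism used, e.g., in the Jakubovi\v c and Dritschel references, and your ``key point'' holds for the following reason. With $C_{-r}$ the leading Laurent coefficient of $G_0$ at the non-real pole $a$, regularity of $F=G_0^\ast G_0$ at $\overline a$ forces $C_{-r}^\ast D=0$ for the leading Laurent coefficient $D$ of $G_0$ at $\overline a$ (take $D=G_0(\overline a)$ if $G_0$ is regular there); since $P$ is the orthogonal projection onto $\operatorname{ran}C_{-r}$, this gives $PD=0$, so $B_aG_0$ gains no pole at $\overline a$ while losing at least one order at $a$. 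Hence the total pole order strictly decreases, and the iteration terminates. Your phrasing ``a relation between the Laurent coefficients of $G_0$ at $a$'' is slightly off---the relevant relation links the leading coefficient at $a$ to the leading coefficient (or value) at $\overline a$---but the conclusion and the whole scheme are sound.
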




In the scalar case ($n=1$) Theorem \ref{zvezna} has already been extended to a finite union of 
points and intervals (not necessarily bounded) in $\RR$ by S.\ Kuhlmann and Marshall 
\cite[Theorem 2.2]{K-M}.
The main problem of our paper is the following.

\begin{problem*} 
	Characterize univariate matrix polynomials which are positive semi\-de\-fi\-nite on a finite union of 
	points and intervals (not necessarily bounded) in $\RR$.
\end{problem*}


Our main results, which will be explicitly stated in Subsection 1.3, are a denominator-free generalization of Theorem \ref{zvezna} to a finite union of compact intervals in $\RR$, a classification of counterexamples for a denominator-free generalization to an unbounded finite union of closed intervals in $\RR$ and a generalization with denominators in this case.

\subsection{Notation and known results} \label{Continuous-known-results}

Let $M_n(\CC[x])$ be a set of all $n\times n$ matrix polynomials over $\CC[x]$ equipped
with the \textsl{involution} $F(x)^\ast=\overline{F(x)}^T$ where $\overline{x}=x$. 

\begin{remark}
	For $n=1$ and $p(x):=\sum_{i=0}^m a_ix^i\in \CC[x]$, the involution is
	$p(x)^{\ast}=\sum_{i=0}^m \overline{a_i}x^i$.
\end{remark}
We say $F(x)\in M_n(\CC[x])$ is \textsl{hermitian} if $F(x)=F(x)^\ast$. We write $\HH_n(\CC[x])$ for the set of all hermitian matrix polynomials from $M_n(\CC[x])$. 
A matrix polynomial $F(x)\in \HH_n(\CC[x])$ is \textsl{positive semidefinite} 
in $x_0\in \CC$ if $v^\ast F(x_0)v \geq 0$ 
for every nonzero $v\in \CC^n$.
We denote by $\sum M_n(\CC[x])^2$ the set of all finite sums
of the expressions of the form $G(x)^\ast G(x)$ where $G(x)\in M_n(\CC[x])$. We call such expressions 
\textsl{hermitian squares} of matrix polynomials.

The \textsl{closed semialgebraic set} associated to a finite subset
	$S=\left\{g_1,\ldots, g_s\right\}\subset \RR\left[x\right]$
is given by
	$K_{S}=\left\{x\in \RR\colon g_j(x)\geq 0,\; j=1,\ldots,s\right\}.$
We define the \textsl{$n$-th matrix quadratic module generated by $S$} in $\HH_n(\CC[x])$ by
	\begin{eqnarray*}
		M^n_{S} 
			&:=& 
				\left\{\sigma_0+ \sigma_1 g_1+\ldots + \sigma_s g_s\colon \sigma_j\in 
				\sum M_n(\CC[x])^2,\;j=0,\ldots,s\right\},
	\end{eqnarray*}
and the \textsl{$n$-th matrix preordering generated by $S$} in $\HH_n(\CC[x])$ by
	\begin{eqnarray*}
		T^n_{S} 
			&:=& 
				\left\{\sum_{e\in \left\{0,1\right\}^s} \sigma_e \underline{g}^e\colon \sigma_e\in 
				\sum M_n(\CC[x])^2\;\text{for all}\;e\in \left\{0,1\right\}^s\right\},
	\end{eqnarray*}
where $e:=(e_1,\ldots,e_s)$ and $\underline{g}^e$ stands for $g_1^{e_1}\cdots g_s^{e_s}$.
\begin{remark}
	Note that $T^n_{S}$ is the quadratic module generated by all products $\underline{g}^e$, $e\in
	\{0,1\}^s$.
\end{remark}


We write $\Pos^n_{\succeq 0}(K_S)$ for the set of all $n\times n$ 
hermitian matrix polynomials which are positive semidefinite on $K_S$.
We say $M^n_S$ (resp.\ $T^n_S$) is \textsl{saturated} if 
$M^n_S=\Pos^n_{\succeq 0}(K_S)$ (resp.\ $T^n_S=\Pos^n_{\succeq 0}(K_S)$).

Theorem \ref{zvezna} can be restated in the following form.

\renewcommand{\thetheorem}{1.1'}\begin{theorem} \label{zvezni-fejer-riesz} 
	Assume the notation as above. The set $M^n_{\emptyset}=T^n_{\emptyset}$ is saturated for every $n\in\NN$.
\end{theorem}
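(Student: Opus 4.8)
The statement to prove is Theorem~1.1' (the restatement of Theorem~\ref{zvezna}), namely that $M^n_{\emptyset}=T^n_{\emptyset}=\sum M_n(\CC[x])^2$ coincides with $\Pos^n_{\succeq 0}(\RR)$ for every $n$. One inclusion is trivial: any hermitian square $G(x)^\ast G(x)$ is positive semidefinite at every real point, hence so is any finite sum, giving $\sum M_n(\CC[x])^2\subseteq \Pos^n_{\succeq 0}(\RR)$. The content is the reverse inclusion, which is exactly Theorem~\ref{zvezna}: a matrix polynomial positive semidefinite on $\RR$ is a hermitian square. Since the excerpt explicitly cites a long list of sources for Theorem~\ref{zvezna} and presents Theorem~1.1' as a mere reformulation, the ``proof'' here is really the verification that the two formulations are equivalent.

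The plan is therefore as follows. First I would observe that for $S=\emptyset$ we have $\underline{g}^e$ ranging over the empty product only, so $M^n_\emptyset=T^n_\emptyset=\sum M_n(\CC[x])^2$ by definition; this disposes of the identification $M^n_\emptyset=T^n_\emptyset$. Next, given $F\in\Pos^n_{\succeq 0}(\RR)$, I would note that $F$ is hermitian and write $F(x)=\sum_m F_m x^m$; positivity on all of $\RR$ forces the top-degree behaviour to be even, i.e. $\deg F=2N$ with leading coefficient $F_{2N}\succeq 0$ (otherwise $v^\ast F(x)v$ would be a scalar polynomial of odd degree or with negative leading coefficient, hence negative somewhere on $\RR$). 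Thus $F$ has the exact shape required in the hypothesis of Theorem~\ref{zvezna}. Applying Theorem~\ref{zvezna} yields $G\in M_n(\CC[x])$ with $F=G^\ast G\in\sum M_n(\CC[x])^2=M^n_\emptyset=T^n_\emptyset$. Combined with the trivial inclusion, this gives $M^n_\emptyset=T^n_\emptyset=\Pos^n_{\succeq 0}(\RR)$, i.e. both are saturated.

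There is no real obstacle here beyond the bookkeeping just described, since Theorem~\ref{zvezna} is quoted as known. The only point requiring a line of care is the degree/parity argument showing that an arbitrary element of $\Pos^n_{\succeq 0}(\RR)$ automatically matches the normalized form $\sum_{m=0}^{2N}F_mx^m$ assumed in Theorem~\ref{zvezna}; once that is noted, the equivalence of the two statements is immediate and the proof is complete.
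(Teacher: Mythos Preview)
Your proposal is correct and matches the paper's treatment exactly: the paper gives no separate proof of Theorem~1.1' but simply presents it as a restatement of Theorem~\ref{zvezna} (with the proof of the latter deferred to the cited references), and your verification that $M^n_\emptyset=T^n_\emptyset=\sum M_n(\CC[x])^2$ together with the degree-parity observation is precisely the bookkeeping needed to see the equivalence.
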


\addtocounter{theorem}{-1}
\renewcommand{\thetheorem}{\arabic{section}.\arabic{theorem}}

The aim of this article is to study matrix generalizations of Theorem \ref{zvezni-fejer-riesz} 
to an arbitrary closed semialgebraic set $K\subseteq \RR$. In this notation Problem becomes the following.

\begin{problem'*}\label{zvezno-vprasanje}
		Assume $K\subseteq \RR$ is a closed semialgebraic set. 
	Does there exist a finite set $S\subset \RR[x]$ 
	such that $K=K_S$ and the $n$-th matrix quadratic module $M^n_S$ or preordering $T_{S}^n$ is saturated 
	for every $n\in \NN$?
\end{problem'*}

%

Now we recall a description of a closed semialgebraic set $K\subseteq \RR$, introduced in \cite{K-M}, which solves Problem' for $n=1$. A set $S=\{g_1,\ldots,g_s\}\subset \RR\left[x\right]$ is the
\textsl{natural description} of $K$ if it satisfies the following conditions:
	\begin{enumerate}
		\item[(a)] If $K$ has the least element $a$, then $x-a\in S$.
		\item[(b)] If $K$ has the greatest element $a$, then $a-x\in S$.
		\item[(c)] For every $a\neq b\in K$, if $(a,b)\cap K=\emptyset$, then $(x-a)(x-b)\in S$.
		\item[(d)] These are the only elements of $S$.
	\end{enumerate}
	
	Problem' has already been solved in the following cases:
%

\begin{enumerate}
	\item The preordering $T_S^1$ is saturated for the natural description $S$ of $K$ 
		(see \cite[Theorem 2.2]{K-M}).
	\item For $K=K_{\{x,1-x\}}=[0,1]$, $M^n_{\{x,1-x\}}$ is saturated for every $n\in \NN$ 
			(see \cite[Theorem 2.5]{ds} or \cite[Theorem 7]{Sch-Sav}).
	\item For $K=K_{\{x\}}=[0,\infty)$, $M^n_{\{x\}}$ is saturated for every $n\in \NN$ 
			(see \cite[Theorem 8]{Sch-Sav} or \cite[Proposition 3]{Cim-Zal}).
\end{enumerate}	

	Even more can be said in the case $n=1$. There is a characterization of finite sets 
$S=\{g_1,\ldots,g_s\}\subset \RR\left[x\right]$ such that the preordering $T_S^1$ is saturated, which we now explain. We separate two possibilities according to the compactness of $K_S$.
\begin{enumerate}
	\item \textsl{$K_S$ is not compact:} By \cite[Theorem 2.2]{K-M}, 
		$T^1_S$ is saturated iff $S$ contains each of the 	
		polynomials in the natural description of $K_S$ up to scaling by positive constants.
	\item \textsl{$K_S$ is compact:} Write $K_S$ as the union of pairwise disjoint points and 
		intervals, i.e., $K_S=\cup_{j=1}^t [x_j,y_j]$ where $x_j\leq y_j$ for every $j=1,\ldots,t$. 
		By a special case of Scheiderer's results \cite[Corollary 4.4]{Scheiderer4}, \cite[Theorem 
		5.17]{Scheiderer3} (which cover non-singular curves in $\RR^n$), $M_S^1=T^1_S$ and 
		$M_S^1$ is 
		saturated iff the following two conditions hold:
		\begin{enumerate}
			\item[(a)] For every left endpoint $x_j$ there exists $k\in \{1,\ldots,s\}$
				such that $g_k(x_j)=0$ and $g_k'(x_j)>0$.
			\item[(b)] For every right endpoint $y_j$ there exists $k\in \{1,\ldots,s\}$
				such that $g_k(y_j)=0$ and $g_k'(y_j)<0$.
		\end{enumerate}
		(For another proof see \cite[Theorem 3.2]{K-M-2}.).
		We call every set $S\subset \RR\left[x\right]$ which satisfies the two conditions above
		a \textsl{saturated description} of $K_S$.
	\end{enumerate}
	
\begin{convention*}
	An interval always has a non-empty interior.
\end{convention*}

\subsection{New results} \label{Problem 2 - novi rezultati}

One of the main results of the paper which solves Problem' for compact sets $K$ 
is the following.

\renewcommand{\thetheorem}{C}\begin{theorem} 
	Let $K$ be a compact semialgebraic set.
	The $n$-th matrix quadratic module $M_S^n$ is saturated for every $n\in \NN$
	iff $S$ is a	saturated description of $K$ (see Theorem \ref{nasicenost-na-R}).
\end{theorem}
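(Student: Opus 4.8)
One implication is immediate. If $M_S^n$ is saturated for every $n$, then in particular $M_S^1=\Pos^1_{\succeq0}(K_S)$, and by the special case of Scheiderer's theorem for compact $K_S\subseteq\RR$ recalled in Subsection~\ref{Continuous-known-results} this forces the two endpoint conditions, i.e.\ $S$ is a saturated description of $K=K_S$. For the converse, assume $S$ is a saturated description of the compact set $K$; since $M_S^n\subseteq\Pos^n_{\succeq0}(K)$ always holds, it remains to show $\Pos^n_{\succeq0}(K)\subseteq M_S^n$ for all $n$. The first move is a reduction lemma: \emph{if $S'$ is any finite subset of $\RR[x]$ with $K_{S'}=K$ for which $M_{S'}^n$ is saturated for all $n$, then so is $M_S^n$.} Indeed, $S$ being a saturated description gives $M_S^1=\Pos^1_{\succeq0}(K)$, so every $g'\in S'$, being $\geq0$ on $K$, lies in $M_S^1$; and for a matrix hermitian square sum $\sigma=\sum_lA_l^\ast A_l$ and a scalar $p=\tau_0+\sum_j\tau_jg_j\in M_S^1$ (each $\tau_j$ a sum of squares in $\RR[x]$) one has $\sigma p=\sigma\tau_0+\sum_j(\sigma\tau_j)g_j\in M_S^n$, because pulling the real scalar squares of $\tau_j$ inside the $A_l$ exhibits each $\sigma\tau_j$ again as a sum of hermitian squares. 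Writing an arbitrary $F\in\Pos^n_{\succeq0}(K)=M_{S'}^n$ as $\sigma_0'+\sum_{g'\in S'}\sigma'_{g'}g'$ and applying this to each term gives $F\in M_S^n$.

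So it suffices to exhibit, for every compact semialgebraic $K\subseteq\RR$, one description $S'$ with $M_{S'}^n$ saturated for all $n$; I will do this by induction on the number $t$ of connected components of $K=\bigsqcup_{j=1}^t[x_j,y_j]$. If $t=1$ and $K=[a,b]$ is an interval, take $S'=\{x-a,\,b-x\}$: the affine substitution sending $[0,1]$ onto $[a,b]$ carries matrix hermitian squares to matrix hermitian squares and identifies $M_{S'}^n$ with $M_{\{x,1-x\}}^n$, which is saturated for all $n$ by \cite[Theorem~2.5]{ds} (or \cite[Theorem~7]{Sch-Sav}). If $t=1$ and $K=\{a\}$ is a point, take $S'=\{x-a,\,a-x\}$: given $F\in\Pos^n_{\succeq0}(\{a\})$, write $F=F(a)+(x-a)H$ with $H\in\HH_n(\CC[x])$; here $F(a)\succeq0$ is a constant hermitian square, and the identity $H=\tfrac14(I+H)^\ast(I+H)-\tfrac14(I-H)^\ast(I-H)=:H_1-H_2$ with $H_1,H_2\in\sum M_n(\CC[x])^2$ yields $F=F(a)+H_1\cdot(x-a)+H_2\cdot(a-x)\in M_{S'}^n$.

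For $t\geq2$, fix one gap $(y_m,x_{m+1})$ and split $K=K_1\sqcup K_2$, $K_1$ the union of the first $m$ components and $K_2$ the rest. Put $p(x)=(x-y_m)(x-x_{m+1})$, which is $\geq0$ on $K$, vanishes at $y_m$ and $x_{m+1}$, and satisfies $p'(y_m)<0<p'(x_{m+1})$; let $S'$ be obtained from the standard saturated descriptions of $K_1$ and $K_2$ (whose matrix modules are saturated by the induction hypothesis together with the reduction lemma) by deleting the two inner endpoint witnesses $y_m-x$ and $x-x_{m+1}$ and adjoining the single polynomial $p$. One checks directly that $K_{S'}=K$ and that $S'$ is a saturated description. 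Given $F\in\Pos^n_{\succeq0}(K)$, one may first assume $\det F\not\equiv0$: if $\det F\equiv0$ there is a primitive $v\in\CC[x]^n$ with $Fv=0$, hence $v^\ast F=(Fv)^\ast=0$, so conjugating by a unimodular $U\in GL_n(\CC[x])$ with last column $v$ gives $U^\ast FU=\diag(F',0)$ with $F'\in\HH_{n-1}(\CC[x])$ still positive semidefinite on $K$; since conjugation by a matrix over $\CC[x]$ preserves $M_{S'}^n$, induction on $n$ applies. In the regular case one combines the representation of $F$ over the description of $K_1$ with the one over $K_2$ into a single representation over $S'$, using an interpolation supported near the gap together with the first-order vanishing of $p$ with the correct derivative signs at $y_m$ and $x_{m+1}$ to absorb the two discarded terms in $y_m-x$ and $x-x_{m+1}$.

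The step I expect to be the main obstacle is exactly this last gluing across a gap: merging a matrix polynomial that is positive semidefinite on the disconnected set $K_1\sqcup K_2$ into a \emph{denominator-free} representation over $S'$. The naive strategy of extending $F$ to a positive semidefinite matrix polynomial on the convex hull of $K$ modulo a multiple of $p$ fails, because the only available corrections are multiples $pG$ with $G$ a hermitian matrix polynomial that is forced to be \emph{not} positive semidefinite on $K$, so $pG$ need not lie in $M_{S'}^n$; controlling this is precisely where the derivative-sign (saturated description) hypothesis at the inner endpoints is genuinely used. By contrast, the reduction lemma and the two base cases are essentially formal, given the quoted scalar result of Scheiderer and the theorem on $M_{\{x,1-x\}}^n$.
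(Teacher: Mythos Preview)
Your reduction lemma and the two base cases ($t=1$, interval or point) are fine, and the ``only if'' direction is correct. But the heart of your argument, the inductive gluing step across a gap, is not a proof: you have a representation $F=\sigma_0+\sum_i\sigma_ig_i+\sigma_r(y_m-x)$ coming from $K_1$ and another $F=\tau_0+\sum_j\tau_jh_j+\tau_\ell(x-x_{m+1})$ coming from $K_2$, and you need to produce a single representation in $M_{S'}^n$ where neither $y_m-x$ nor $x-x_{m+1}$ appears, only their product $p$. You say this is done ``using an interpolation supported near the gap together with the first-order vanishing of $p$,'' but no mechanism is given, and there is no obvious one: the offending term $\sigma_r(y_m-x)$ is negative semidefinite on all of $K_2$, so it cannot simply be absorbed into a sum of hermitian squares or into a $p$-term without a compensating negative contribution elsewhere. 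Averaging the two representations does not help either, since the cross terms still involve $y_m-x$ and $x-x_{m+1}$ separately. You yourself flag this as ``the main obstacle,'' and it is; as written, the induction does not close.

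The paper avoids this gluing problem entirely by a different route. Instead of induction on the number of components, it proves an ``$h^2F$-proposition'' (Proposition~\ref{pomozna}): for every $F\succeq0$ on $K$ and every $x_0\in\CC$ there is $h\in\RR[x]$ with $h(x_0)\neq0$ and $h^2F\in M_S^n$. This is done by induction on the matrix size $n$, via a Schur-complement decomposition (Lemma~\ref{h-2-1-lema}) that reduces to size $n-1$ and to the scalar case simultaneously. The collection of such $h^2$ then generates the unit ideal in $\RR[x]$, and a partition-of-unity result of Scheiderer (Proposition~\ref{basic-lemma-2}) produces $s_j>0$ on $K$ with $\sum_j s_jh_j^2=1$, whence $F=\sum_j s_jh_j^2F\in M_S^n$. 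The point is that the denominator $h^2$ is removed \emph{globally}, not gap by gap, and the compactness of $K$ enters through Proposition~\ref{basic-lemma-2} rather than through any explicit gluing.
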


\addtocounter{theorem}{-1}
\renewcommand{\thetheorem}{\arabic{section}.\arabic{theorem}}

	
	The answers to Problem' for unbounded sets $K$ (except for a union of one or two unbounded intervals and a point) are given by the following result. 

\renewcommand{\thetheorem}{D}\begin{theorem} 
	Let $K$ be an unbounded closed semialgebraic set.
	
	The $n$-th matrix quadratic module $M_S^n$ is saturated for the natural description $S$ of $K$ 
	and every $n\in \NN$ if $K$ is either of the following:
				\begin{enumerate}
						\item An unbounded interval (by Theorem \ref{zvezni-fejer-riesz} and 
							\cite[Theorem 8]{Sch-Sav}).
						\item A union of two unbounded intervals (see Proposition \ref{dva intervala}).
				\end{enumerate}
			
	The $n$-th matrix preordering $T^{n}_S$ is not saturated for any finite set $S\subset \RR[x]
	$ such that
	$K=K_S$ in the following cases (see Theorem \ref{protiprimeri-za-sibko-omejenostno-nasicenost-R}):
		\begin{enumerate}
			\item $n\geq 2$ and $K$ contains at least two intervals with at least one of them bounded.
			\item $n\geq 2$ and $K$ is a union of an unbounded interval
				and $m$ isolated points with $m\geq 2$.
			\item	$n\geq 2$ and $K$ is a union of two unbounded intervals and $m$
				isolated points with $m\geq 2$.
		\end{enumerate}
\end{theorem}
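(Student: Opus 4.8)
\emph{The plan.} The statement comprises two positive assertions (saturation of $M^n_S$ for unbounded intervals and for unions of two unbounded intervals) and three non-saturation assertions; these have a rather different flavour and I would treat them in turn. The positive half is a short reduction to known matrix theorems, while the non-saturation statements are the substance.

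\textbf{Positive half.} Let $K$ be an unbounded interval. If $K=\RR$, this is Theorem~\ref{zvezni-fejer-riesz}. If $K=[a,\infty)$ or $K=(-\infty,b]$, the affine substitution carrying $K$ onto $[0,\infty)$ identifies $M^n_S$ for the natural description $S$ (which is $\{x-a\}$, resp.\ $\{b-x\}$) with $M^n_{\{x\}}$, so saturation is \cite[Theorem~8]{Sch-Sav}. For $K=(-\infty,a]\cup[b,\infty)$ with $a<b$ (Proposition~\ref{dva intervala}) the natural description is $S=\{(x-a)(x-b)\}$; after a translation putting $0$ into the gap $(a,b)$, the substitution $x=1/y$ sends the two ends of $K$ to $y=0$ and sends $K$ onto the compact interval with endpoints $1/a<0<1/b$, while a hermitian $F$ of degree $2N$ goes to $\tilde F(y)=y^{2N}F(1/y)$, positive semidefinite on that interval by continuity. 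The compact matrix Positivstellensatz, applied with the single-generator saturated description $\{(y-1/a)(1/b-y)\}$ of that interval, gives $\tilde F=\tau_0+\tau_1\,(y-1/a)(1/b-y)$ with $\tau_i\in\sum M_n(\CC[y])^2$ of degree $<2N$; reversing each $\tau_i$ and multiplying by the suitable even power of $x$ keeps them sums of hermitian squares, and unwinding the substitution, using $(1/x-1/a)(1/b-1/x)=(x-a)(x-b)/(|ab|x^2)$, yields $F=\sigma_0+\sigma_1(x-a)(x-b)\in M^n_S$.

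\textbf{Non-saturation: two reductions.} Fix one of the three geometries, write $S'=\{g_1,\ldots,g_r\}$ for the natural description of $K$, and let $S=\{h_1,\ldots,h_s\}$ be any finite subset of $\RR[x]$ with $K_S=K$. First I would prove $T^n_S\subseteq T^n_{S'}$: each $h_i$ is nonnegative on $K=K_{S'}$, and since $K_{S'}$ is noncompact its natural description saturates the scalar preordering by \cite[Theorem~2.2]{K-M}, so $h_i\in T^1_{S'}$; hence every product $\underline h^e=\prod_i h_i^{e_i}$ is nonnegative on $K$, so $\underline h^e\in T^1_{S'}$, and expressing it with scalar-sum-of-squares coefficients of the $\underline g^{e'}=\prod_i g_i^{e_i'}$ and multiplying by any $\sigma_e\in\sum M_n(\CC[x])^2$ produces an element of $T^n_{S'}$ — a sum of hermitian squares times a scalar sum of squares is again a sum of hermitian squares. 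So it suffices to show $T^n_{S'}$ is not saturated. Secondly I would reduce to $n=2$: if $F\in\Pos^2_{\succeq 0}(K)\setminus T^2_{S'}$ then $F\oplus I_{n-2}\in\Pos^n_{\succeq 0}(K)\setminus T^n_{S'}$, since compressing a hypothetical representation of $F\oplus I_{n-2}$ to its upper-left $2\times 2$ corner would represent $F$.

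\textbf{Non-saturation: the counterexamples.} It then remains to exhibit, for each geometry, one $F\in\Pos^2_{\succeq 0}(K)$ outside $T^2_{S'}$; the construction being local around a bounded feature of $K$ (a bounded interval, or a pair of isolated points) together with one unbounded end — $F$ chosen as a harmless multiple of the identity away from there — it is enough to treat representatives such as $[0,1]\cup[2,\infty)$, $\{0,1\}\cup[2,\infty)$ and $(-\infty,0]\cup\{1,2\}\cup[3,\infty)$. The key input is that $K$ is unbounded and every $g_i\in S'$ tends to $+\infty$ along each unbounded end, so in $F=\sum_e\sigma_e\underline g^e$ the leading coefficients cannot cancel; hence $\deg(\sigma_e\underline g^e)\le\deg F$, membership in $T^2_{S'}$ is a finite-dimensional spectrahedral feasibility problem, and $T^2_{S'}$ is closed on each degree slice. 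One chooses $F$ so that, at the two thin points coming from the bounded feature, those $\sigma_e$ which survive evaluation there are forced to be rank-deficient in two linearly independent directions, while the representation as a whole must stay positive definite on the unbounded end; no single $2\times 2$ hermitian square can meet both demands, although for scalars one can — which is exactly why $T^1_{S'}$ is saturated and $T^2_{S'}$ is not. The non-membership $F\notin T^2_{S'}$ is then certified by a linear functional $L$ on $\HH_2(\RR[x])$, i.e.\ a truncated matrix-moment sequence, nonnegative on all sums of hermitian squares and on all $\sigma\,\underline g^e$ but with $L(F)<0$; such an $L$ cannot be integration against a positive matrix measure on $K$, and it exists because on a noncompact $K$ with a thin bounded feature the matrix moment problem fails, the obstruction combining a component ``at infinity'' with a flatness defect at the thin points.

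\textbf{The main obstacle.} The two reductions and the positive half are routine; the real work lies in the counterexamples — pinning down $F$ (degree, ranks and kernels at the thin points, behaviour at infinity) so that the spectrahedral condition genuinely fails, and constructing $L$ together with the verification that $L\ge 0$ on every sum of hermitian squares and on every shifted cone $\sum M_2(\CC[x])^2\cdot\underline g^e$. I expect that verification, where the exact positions of the thin features and the degree bookkeeping must be combined, to be the delicate step.
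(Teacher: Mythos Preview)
Your positive half and the two reductions (to the natural description via \cite[Theorem~2.2]{K-M}, and from $n\ge 2$ to $n=2$ via $F\oplus I_{n-2}$) match the paper's argument. One small point on Proposition~\ref{dva intervala}: the reverse substitution $y\mapsto 1/x$ only returns polynomial output if the representation on the compact interval respects the degree bound $\deg\tau_i\le\deg\tilde F$; the paper secures this by citing the degree-controlled result \cite[Theorem~2.5]{ds} for $[-1,1]$ rather than the general compact case (Theorem~\ref{nasicenost-na-R}), which carries no such bound.

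Where you diverge substantially is in the counterexamples, and there is a genuine gap. The paper does \emph{not} reduce to fixed representatives like $[0,1]\cup[2,\infty)$; your ``$F$ is a harmless multiple of the identity away from there'' does not suffice, because adding extra components to $K$ \emph{enlarges} the natural description $S'$ and hence $T^2_{S'}$, so the new generators might a priori allow $F$ to be represented after all. Instead the paper (Proposition~\ref{trditev-protiprimer-interval-in-poltrak}) writes down an explicit family $F_k\in\HH_2(\RR[x])$ of degree $2$ with $\det F_k=(x-x_1)(x-x_2)(x-x_3)$ for \emph{arbitrary} $x_1<x_2<x_3$, and argues primally against the natural description of the \emph{full} set $K_1$: the degree bound you correctly observe forces $\sigma_0$ to have degree $\le 2$ and all other $\sigma_e$ to be constant; comparing $x^2$-coefficients pins the coefficient of $(x-x_2)(x-x_3)$ to the form $\left[\begin{smallmatrix}0&0\\0&k_0\end{smallmatrix}\right]$; and then $\det\bigl(F_k-\sigma_2(x-x_2)(x-x_3)\bigr)$ is shown to be negative somewhere on $[x_1,x_3]$, contradicting positivity of the remaining summands there. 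For the isolated-point geometries (cases 2 and 3) the paper applies the M\"obius map $x\mapsto 1/(d-x)$ to land in a compact set $[x_1,x_2]\cup\{x_3,\ldots,x_m\}$ and uses a kernel argument at $x_1,x_2$ --- essentially your ``rank-deficient in two independent directions'' --- to kill all but one of the quadratic generators (Claim~2), combined with a nontrivial divisibility argument to strip off leftover isolated points (the Claim in the proof of Theorem~\ref{protiprimeri-za-sibko-omejenostno-nasicenost-R}.1). Your dual route via a separating moment functional $L$ is legitimate in principle --- the truncated cone is indeed closed --- but you do not construct $L$, and the paper's direct determinant/kernel analysis bypasses that machinery entirely.
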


\addtocounter{theorem}{-1}
\renewcommand{\thetheorem}{\arabic{section}.\arabic{theorem}}

	In the remaining cases of a union of one or two unbounded intervals and a point not covered by
Theorems C and D 
we state the following conjecture based on the investigation of some examples.

\begin{conjecture*} 
		Let $K\subseteq \RR$ be either of the following:
	\begin{enumerate}
		\item A union of an unbounded interval and a point.
		\item A union of two unbounded intervals and a point.
	\end{enumerate}
	Suppose $S$ is the natural description of $K$. Then the $n$-th matrix preordering 
	$T_S^n$ is saturated for every natural number $n>1$.
\end{conjecture*}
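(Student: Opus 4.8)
The plan is to lift Kuhlmann--Marshall's scalar proof of \cite[Theorem 2.2]{K-M} to matrix polynomials by feeding into it the matrix-valued results already at our disposal: the matrix Fej\'er--Riesz theorem (Theorem \ref{zvezni-fejer-riesz}), the saturation of $M^n_{\{x\}}$ for a half-line \cite[Theorem 8]{Sch-Sav}, and Proposition \ref{dva intervala} for a union of two unbounded intervals. After an affine change of the variable one may assume $K=\{0\}\cup[1,\infty)$ in case (1) and $K=(-\infty,-1]\cup\{0\}\cup[1,\infty)$ in case (2), so that the natural description is $S=\{x,\,x(x-1)\}$, respectively $S=\{x(x+1),\,x(x-1)\}$. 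The first step is to fix the local picture. Near every point of $K$ other than the isolated point $0$, the set $K$ coincides with an unbounded interval or with $[1,\infty)$, and one member of $S$ vanishes there to first order with exactly the sign prescribed by the natural description; so these points are precisely the situations already settled in the cited theorems. Near $0$, both members of $S$ equal $x$ times a factor which does not vanish at $0$ and which is positive on one side of $0$ and negative on the other, so the localization of $T^n_S$ at $0$ coincides with the localization of the preordering generated by the \emph{saturated} description $\{x,-x\}$ of the point $\{0\}$.

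The second step disposes of the isolated point. If $F\in\HH_n(\CC[x])$ is positive semidefinite on $K$, then $P:=F(0)\succeq0$ is a constant hermitian square, $F-P=xH$ for a hermitian matrix polynomial $H$, and every hermitian matrix polynomial is a difference of sums of hermitian squares (e.g. $2H=(I+H)^\ast(I+H)-(I+H^\ast H)$ with $I+H^\ast H$ a sum of hermitian squares); hence $xH$, and therefore $F$, lies in $T^n_{\{x,-x\}}$, i.e. in the preordering of any description of $K$ whose only possible obstruction sits at the isolated point. The genuinely global content is then to manufacture a single certificate in $T^n_S$ that is at the same time adapted to $F$ on the unbounded part. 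I would attempt a ``peeling'' induction on the number of pieces of $K$ as in \cite{K-M}: split off $0$ by a polynomial interpolation argument carried out inside $M_n(\CC[x])$, write $F=F_1+x(x-1)F_2$ (respectively with $x(x+1)$) with $F_1$ positive semidefinite on the unbounded part and $F_2$ absorbing the correction at $0$, and then apply the known matrix saturation for the unbounded interval(s) to $F_1$ together with the matrix Fej\'er--Riesz theorem to the hermitian-square parts.

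The hard part is precisely this gluing. For $n=1$ it works because of the special arithmetic of $\RR[x]$ (division with control of signs at the finitely many distinguished points), whereas for $n\geq2$ the divisibility and congruence bookkeeping must be compatible with the partial order on hermitian matrices; moreover, by Theorem \ref{protiprimeri-za-sibko-omejenostno-nasicenost-R} no purely local-to-global principle can hold here, since already two isolated points destroy saturation. Any correct proof must therefore exploit that there is \emph{exactly one} isolated point, presumably through a rank/degree count showing that, modulo $T^n_S$, the only datum a matrix polynomial positive semidefinite on $K$ can fail to ``see'' is the value $F(0)\succeq0$, which by the second step is already represented. Making this precise, in particular producing interpolating sums of hermitian squares with simultaneously controlled behaviour at $0$ and at infinity, is the obstacle that leaves the statement a conjecture; a more robust alternative would be to prove a Scheiderer-type local-global theorem for matrix polynomials on the affine curve obtained from $\RR[x]$ by adjoining a square root of a suitable generator, and then to deduce cases (1) and (2) as special instances, at the cost of verifying that the single-isolated-point configuration meets the hypotheses of such a theorem.
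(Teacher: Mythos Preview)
The statement you are addressing is explicitly labelled a \emph{Conjecture} in the paper; the authors do not claim a proof, only that investigation of examples supports it. There is therefore no proof in the paper to compare against, and your proposal is not in competition with one. Your write-up is in fact consistent with this: you outline a strategy, identify the gluing step as the crux, and explicitly concede that ``making this precise \ldots\ is the obstacle that leaves the statement a conjecture.'' That is an honest and accurate assessment.

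A few remarks on the outline itself. The normalisations and natural descriptions you give are correct, and the local analysis near $0$ is fine: indeed $F(0)\succeq 0$ and $F-F(0)=xH$ with $H$ hermitian, and your identity shows $xH\in T^n_{\{x,-x\}}$. But note that $-x$ is \emph{not} in $T^1_S$ for $S=\{x,x(x-1)\}$ (it is negative on $[1,\infty)$), so this local step does not by itself produce anything in $T^n_S$; you are aware of this, but it is worth stressing that the ``second step'' is purely local and carries no global weight on its own. The proposed decomposition $F=F_1+x(x-1)F_2$ with $F_1\succeq 0$ on the unbounded part is the natural shape for a Kuhlmann--Marshall style peeling, but already in the scalar case the existence of such a decomposition with the right sign constraints is the substance of their argument, and lifting it to matrices is exactly where the paper's counterexamples (Proposition \ref{trditev-protiprimer-interval-in-poltrak}) bite when there is more structure to match. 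Your observation that any proof must exploit the \emph{uniqueness} of the isolated point is well taken and is presumably why the authors single out these two cases. The alternative route you sketch via a matrix Scheiderer-type local--global theorem on a suitable curve is also reasonable in spirit, but no such theorem is available in the paper (Theorem \ref{verzija-za-krivulje} requires compactness of $K_S\cap\mathcal Z(I)$), so this would require genuinely new input.
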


Note that by an appropriate substitution of variables both cases covered by Conjecture are equivalent. 


For the unbounded sets $K$ with a negative answer to Problem' we obtain the following characterization of the set $\Pos_{\succeq 0}^n(K)$.

\renewcommand{\thetheorem}{E}
\begin{theorem}
		Let $K$ be an unbounded closed semialgebraic set with a natural description $S$ and 
	$n\in \NN$.	Then the following statements are equivalent: 
		\begin{enumerate}
			\item $F\in \Pos_{\succeq 0}^n(K)$.
			\item For every $w\in \CC$ there exists $h\in\RR[x]$ such that 
				$h(w)\neq 0$ and $h^2 F\in T^n_S$ (see Theorem \ref{glavni}).
			\item For every $w\in \CC\setminus K$ there exists $k_w\in \NN\cup\{0\}$ such 
				that $$((x-\overline{w})(x-w))^{k_w} F\in T^n_S$$  (see Corollary \ref{glavni-2} and Remark 
				\ref{preostali}).
			\item $(1+x^2)^{k} F\in T^n_S$ for some $k\in \NN\cup\{0\}$ (Take $w=i$ in 3.).
		\end{enumerate}
\end{theorem}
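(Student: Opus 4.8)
The implications terminating in (1) are immediate by evaluation: if $(1+x^2)^kF\in T^n_S$, then for every $x_0\in K=K_S$ we have $(1+x_0^2)^kF(x_0)\succeq0$ with $(1+x_0^2)^k>0$, so $F(x_0)\succeq0$; the same applies with $(1+x^2)^k$ replaced by $h^2$ (using, for the given $x_0\in K$, the instance of (2) with $w=x_0$, so that $h(x_0)\neq0$) or by $((x-\overline w)(x-w))^{k_w}$ (for $w\in\CC\setminus K$ this factor does not vanish at $x_0\in K$). And (3)$\Rightarrow$(4): since $i\notin\RR\supseteq K$, there is $k_i$ with $((x-\overline i)(x-i))^{k_i}F=(1+x^2)^{k_i}F\in T^n_S$. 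So it suffices to prove $(1)\Rightarrow(2)$ and $(1)\Rightarrow(3)$.

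For $(1)\Rightarrow(2)$, fix $w\in\CC$ and diagonalize the hermitian form $F$ by congruence over the localization of $\CC[x]$ that is stable under the conjugation involution and centered at $w$ (enlarged, if necessary, so as to be regular also at the finitely many isolated points of $K$): the discrete valuation ring $\CC[x]_{(x-w)}$ if $w\in\RR$, and the semilocal principal ring $A=\RR[x]_{(p_w)}[i]$, where $p_w:=(x-\overline w)(x-w)$, if $w\notin\RR$ (in the latter case the involution interchanges the two maximal ideals of $A$, both with residue field $\CC$). Over such a local or semilocal principal $\ast$-ring a hermitian form decomposes into rank-one forms $\langle u_i\pi^{a_i}\rangle$ with $u_i$ a unit, $\pi$ the uniformizer ($x-w$, resp.\ $p_w$) and $a_i\ge0$; hence there is $V\in GL_n(\CC(x))$, regular together with its inverse at $w$ (being an element of $GL_n$ of that local ring), such that $V^\ast FV=\diag(d_1,\dots,d_n)$, each $d_i\in\RR(x)$ being a power of $\pi$ times a quotient of real polynomials not vanishing at $w$. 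Clearing all denominators by a single real polynomial $\delta$ with $\delta(w)\neq0$ gives $\delta^2F=W^\ast\diag(e_1,\dots,e_n)W$ with $W\in M_n(\CC[x])$ and $e_i\in\RR[x]$; since congruence preserves positive semidefiniteness, evaluating on $K$ shows $e_i\ge0$ on $K$. As $K$ is unbounded with natural description $S$, the scalar Fej\'er--Riesz theorem with gaps \cite[Theorem~2.2]{K-M} gives $e_i\in T^1_S$; and since each coordinate projection $E_{ii}$ satisfies $E_{ii}=E_{ii}^\ast E_{ii}$, we get $\diag(e_1,\dots,e_n)\in T^n_S$, whence $\delta^2F=W^\ast\diag(e_i)W\in T^n_S$. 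Thus $h:=\delta$ works.

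For $(1)\Rightarrow(3)$, let $w\in\CC\setminus K$. If $w\notin\RR$, the affine substitution $y=(x-\re w)/|{\im w}|$ turns $p_w$ into $|{\im w}|^2(1+y^2)$ and $(K,S)$ into an unbounded set with natural description, so it is enough to treat $w=i$, i.e.\ to find $k$ with $(1+x^2)^kF\in T^n_S$. For this I would compactify: identify $\RR\cup\{\infty\}$ with the circle $\SSS=\{u^2+v^2=1\}$ via $x\mapsto\bigl(\tfrac{2x}{1+x^2},\tfrac{x^2-1}{1+x^2}\bigr)$, under which $K$ acquires a compact closure $\overline K$ and the homogenization $G(u,v)=\sum_mF_mu^m(1-v)^{2d-m}$ (with $2d$ at least the degree of $F$) becomes a hermitian matrix polynomial on $\SSS$ that is positive semidefinite on $\overline K$; the matrix Positivstellensatz for compact subsets of a nonsingular complete curve over $\RR$ — the curve analogue of Theorem~C, developed in the last part of the paper — then places $G$ in the matrix quadratic module of a saturated description of $\overline K$, and substituting $u=\tfrac{2x}{1+x^2}$, $v=\tfrac{x^2-1}{1+x^2}$ and clearing the resulting powers of $1+x^2$ yields $(1+x^2)^kF\in T^n_S$. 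If instead $w\in\RR\setminus K$, apply the M\"obius map $x\mapsto(x-w)^{-1}$, which sends $w$ to $\infty$: since $K$ is closed and misses $w$, its image is bounded with compact closure in $\RR$, $F$ transforms to a matrix polynomial positive semidefinite there, Theorem~C applies, and substituting back and clearing powers of $x-w$ gives $((x-\overline w)(x-w))^{k_w}F=(x-w)^{2k_w}F\in T^n_S$. Together with $(3)\Rightarrow(4)\Rightarrow(1)$ and $(2)\Rightarrow(1)$, this closes the four-way equivalence.

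The crux, I expect, is the local diagonalization in $(1)\Rightarrow(2)$ at a non-real $w$: one has to show that a hermitian form over the semilocal principal $\ast$-ring $A$, whose involution swaps its two maximal ideals, decomposes into rank-one forms with valuations controlled by $p_w$, and — decisively — that the transforming matrix and its inverse can be chosen regular at $w$, so that $\delta(w)\neq0$ (for real $w$ this is the classical diagonalization of hermitian forms over a discrete valuation ring). A secondary point is the bookkeeping in the compactification step: producing a saturated description of $\overline K$ on $\SSS$ (respectively of the image of $K$ on $\RR$) and tracking precisely which powers of $1+x^2$ (respectively $x-w$) arise when the Positivstellensatz certificate is pulled back. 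Apart from these, the proof is evaluation together with \cite{K-M} and Theorem~C.
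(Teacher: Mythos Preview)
Your overall architecture matches the paper's: the easy implications are handled by evaluation, $(1)\Rightarrow(2)$ by a local diagonalization together with the scalar case \cite[Theorem~2.2]{K-M}, and $(1)\Rightarrow(3)$ by a M\"obius compactification of $\RR$ to a compact curve followed by the compact matrix result. The differences are in presentation rather than substance.

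For $(1)\Rightarrow(2)$ the paper does not invoke hermitian form theory over (semi)local $\ast$-rings. Instead it carries out the diagonalization by hand, exactly as in Proposition~\ref{pomozna}: write $F=c^mG$ with $c=x-w$ (resp.\ $c=(x-w)(x-\overline w)$) and $G(w)\neq 0$; locate a nonzero entry of $G(w)$ and use explicit unitary matrices (Lemma~\ref{permutacijska-lema}) to move a polynomial with nonzero value at $w$ into the $(1,1)$ slot; then apply the Schur-complement identity (Lemma~\ref{h-2-1-lema}) and induct on $n$. This produces an explicit $h\in\RR[x]$ with $h(w)\neq 0$ and $h^2F\in T^n_S$, using \cite[Theorem~2.2]{K-M} only in the scalar base case. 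Your abstract route reaches the same conclusion, but the step you flag as the ``crux'' --- diagonalizing a hermitian form over the semilocal $\ast$-ring $A=\RR[x]_{(p_w)}[i]$ with the involution swapping the two maximal ideals, while keeping the transition matrix a unit of $A$ --- is exactly what the paper's concrete computation accomplishes without appealing to a general theorem. If you want to keep the abstract phrasing, you should either supply that diagonalization lemma or note that it can be replaced by the explicit Schur-complement induction.

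For $(1)\Rightarrow(3)$ your compactification to the circle is precisely the paper's method (Corollary~\ref{glavni-2}): the paper uses the M\"obius map $\lambda_{z_0,w_0}(x)=z_0\frac{x-w_0}{x-\overline{w_0}}$ to $\TT$, applies the circle version (Corollary~\ref{posledica1}, which is the specialization of Theorem~\ref{verzija-za-krivulje}), and reads off the power of $(x-\overline w)(x-w)$ from the degrees appearing when pulling the certificate back. Your treatment of real $w\in\RR\setminus K$ via $x\mapsto(x-w)^{-1}$ and Theorem~C is what the paper alludes to in Remark~\ref{preostali} as ``a similar but more technical proof''; the bookkeeping you mention (checking that the pulled-back generators lie in $T^1_S$ by \cite{K-M}, so that only powers of $x-w$ survive as denominators) is indeed the content of that remark.
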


\addtocounter{theorem}{-1}
\renewcommand{\thetheorem}{\arabic{section}.\arabic{theorem}}

	The following table summarizes \cite[Theorem 2.2]{K-M}, Theorems C, D and Conjecture.

$$\begin{array}{|c|c|c|c|} 
\hline
K &A & B \\
\hline
\text{a bounded set}  & \text{Yes} & \text{Yes}  \\
\hline
\text{an unbounded interval} & \text{Yes} & \text{Yes}   \\ 
\hline
\text{a union of an unbounded interval and an isolated point} & \text{Yes} & \text{C}   \\
\hline
\begin{tabular}{cc}
    \text{a union of an unbounded interval and}\\
    \text{$m$ isolated points with $m\geq 2$}
\end{tabular} 
& \text{Yes} & \text{No} \\
\hline
\text{a union of two unbounded intervals} & \text{Yes} & \text{Yes}   \\
\hline
\text{a union of two unbounded intervals and an isolated point} & \text{Yes} & \text{C}   \\
\hline
\begin{tabular}{cc}
    \text{a union of two unbounded intervals and}\\
    \text{$m$ isolated points with $m\geq 2$}
\end{tabular} 
& 
\text{Yes} & \text{No}  \\
\hline
\text{includes a bounded and an unbounded interval} & \text{Yes} & \text{No}\\
\hline
\end{array}$$
\begin{eqnarray*}
	A &:=& \text{ The } \text{preordering }T_S^1 \text{ is saturated for the natural description
			} S \text{ of }\\
		&& K.\\
	B &:=& \text{ The } n\text{-th matrix preordering }T_S^n \text{ is saturated 
		for the natural} \\ 	
		&&\text{description } S \text{ of } K\text{ and every integer } n\in\NN.\\
	C &:=& \text{ See Conjecture.}
\end{eqnarray*}

\begin{remark}
	\begin{enumerate}
	\item Since $T^1_S$ is saturated for the natural description $S$ of $K$, it follows that if
	$T^n_S$ is not saturated for some $n\in \NN$, then $T^n_{S_1}$ is not saturated for
	any finite set $S_1$ satisfying $K_{S_1}=K$.
	\item The classification covers all closed semialgebraic sets $K\subseteq \RR$. 	
A set $K$ is \textsl{regular} if it is equal to the closure of its interior. 
For regular sets $K\subseteq \RR$ the classification is complete.
	\end{enumerate}
\end{remark}


%


\section{Saturated descriptions of a compact set $K\subset \RR$ generate saturated $n$-th matrix quadratic modules}


The solution to Problem' from the Introduction for a compact set $K$ is the main result of this section (see Theorem \ref{nasicenost-na-R} below). It also characterizes all finite sets $S$ such that the quadratic module $M^n_S$ is saturated for every natural number $n\in \NN$.


\begin{theorem} \label{nasicenost-na-R}
	Suppose $K$ is a non-empty compact semialgebraic set in $\RR$.
	The $n$-th matrix quadratic module $M_S^n$ is saturated for every $n\in \NN$ iff $S$ a 
	saturated description of $K$. 
\end{theorem}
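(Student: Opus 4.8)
The plan is to prove the two implications separately, reducing everything to the scalar results recalled in the introduction plus the known saturation facts for $[0,1]$ and $[0,\infty)$.

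First, the ``only if'' direction. Suppose $M_S^n$ is saturated for every $n\in\NN$; in particular $M_S^1$ is saturated, so by the compact scalar case recalled in Subsection~\ref{Continuous-known-results} (Scheiderer's result, resp.\ \cite[Theorem 3.2]{K-M-2}) the set $S$ must be a saturated description of $K$. This direction is essentially immediate once the scalar statement is in hand, so I would dispose of it in a sentence or two.

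The ``if'' direction is the substance. Assume $S=\{g_1,\dots,g_s\}$ is a saturated description of $K=\bigcup_{j=1}^t[x_j,y_j]$ (points allowed, $x_j=y_j$), and let $F\in\Pos^n_{\succeq 0}(K)$. The strategy is a standard localization/patching argument. First I would handle the case $K$ bounded but possibly with isolated points by a diagonalization/reduction to intervals: the value conditions (a),(b) in the definition of a saturated description pin down, at each endpoint, a generator vanishing to first order with the correct sign of the derivative, which is exactly what is needed to absorb boundary behaviour. Concretely, for a single interval $[a,b]$ one writes, after an affine change of variable sending $[a,b]$ to $[0,1]$, $F = \sigma_0 + \sigma_1 g_k + \sigma_2 g_\ell$ with $g_k,g_\ell$ the two distinguished generators (vanishing at the left, resp.\ right endpoint with the right derivative sign), using the known saturation of $M^n_{\{x,1-x\}}$ on $[0,1]$ from \cite[Theorem 2.5]{ds} together with a comparison lemma showing that $x$ (resp.\ $1-x$) and $g_k$ (resp.\ $g_\ell$) generate the same module modulo hermitian squares near that endpoint, because $g_k = x\cdot u_k$ with $u_k$ a polynomial positive on $[0,1]$ (here positivity of a scalar polynomial on $[0,1]$ gives $u_k\in M^1_{\{x,1-x\}}\subseteq M^n_{\{x,1-x\}}$, and $x\cdot u_k$ times a hermitian square stays in the module). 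For an isolated point $\{x_j\}$ one uses the generator $(x-x_j)^2$-type factor similarly.

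Then comes the genuinely global step: combining the pieces coming from the different components $[x_j,y_j]$. Here the obstacle is that a decomposition valid on one interval need not be valid on $\RR$, and one must glue. I would use a partition-of-unity-style argument at the level of squares: since the intervals are pairwise disjoint and compact, one can find, for each $j$, a polynomial $p_j$ that is a hermitian square (a scalar square times identity suffices) which is ``large'' on $[x_j,y_j]$ and small on the others, arrange $\sum_j p_j$ to be bounded below by a positive constant on $K$, and then write $F = (\sum_j p_j)^{-1}\sum_j p_j F$ symbolically — made rigorous by first multiplying through, solving on each component, and then invoking an Archimedean/strict-positivity argument (or a direct clearing-of-denominators using that on a compact set a positive constant is dominated by an element of the module). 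The cleanest route is probably: reduce to $F\succ 0$ on $K$ by adding $\epsilon$ times a sum-of-squares that dominates $F$ near the boundary (using the endpoint conditions to control the boundary), prove the strictly positive case by the patching just described, and then pass to the semidefinite case by a limiting/degree-bounded perturbation argument, or better, by directly factoring out the common zeros of $F$ on $K$ (a hermitian matrix polynomial positive semidefinite on an interval and singular at an interior point must have the singular direction to even order there, which lets one peel off a scalar square factor).

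The main obstacle I expect is precisely this last patching-plus-semidefinite step: in the matrix setting one cannot diagonalize $F$ globally, so the reduction from semidefinite to definite, and the gluing across the gaps, have to be done carefully and are where the hypotheses on $S$ (a \emph{saturated} description, not merely the natural one) are actually used. I would expect the proof to lean on a lemma — stated and proved separately — that on a single compact interval $M^n_S$ is saturated whenever $S$ satisfies the endpoint derivative conditions there, with the multi-interval case then following by the partition argument above.
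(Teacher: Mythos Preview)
Your ``only if'' direction is correct and matches the paper: saturation of $M_S^1$ forces $S$ to be a saturated description by the scalar result.

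The ``if'' direction, however, has a genuine gap at exactly the point you flag as the main obstacle. Your single-interval argument is fine: if $K=[a,b]$ and $F=\sigma_0+\sigma_1 x+\sigma_2(1-x)$ from the $[0,1]$ result, then since $x,\,1-x\in \Pos^1_{\geq 0}(K)=M^1_S$ (scalar saturation), you can expand each scalar weight in $M^1_S$ and absorb the scalar sos coefficients into the hermitian squares $\sigma_i$, landing in $M^n_S$. But this breaks for several components. When $K=\bigcup_j[x_j,y_j]$, the $[0,1]$ decomposition on the $j$-th interval uses $x-x_j$ and $y_j-x$, and these are \emph{not} in $\Pos^1_{\geq 0}(K)$ as soon as $K$ has a component to the left (resp.\ right) of $[x_j,y_j]$; hence they are not in $M^1_S$, and the scalar-to-matrix lifting fails. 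Your partition-of-unity idea cannot repair this: polynomial ``bump functions'' $p_j$ cannot vanish on the other intervals, so $p_j^2\sigma_1^{(j)}(x-x_j)$ is still negative-definite somewhere on $K$ and cannot lie in $M^n_S$. Replacing $x-x_j$ by the natural-description generator $(x-y_{j-1})(x-x_j)$ does not help either, since the $[0,1]$ result gives no certificate with that quadratic weight. Finally, the proposed passage from $F\succ 0$ to $F\succeq 0$ by perturbation is not justified: $M^n_S$ is not known to be closed, and ``factoring out common zeros'' of a matrix polynomial has no clean analogue of the scalar even-order trick.

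The paper's route is entirely different and sidesteps the gluing problem. It never localizes to intervals. Instead it runs an induction on the matrix size $n$: given $F\succeq 0$ on $K$ and any point $x_0\in\CC$, a Schur-complement identity (Lemma~\ref{h-2-1-lema}, after a unitary rotation from Lemma~\ref{permutacijska-lema} to make the $(1,1)$ entry nonvanishing at $x_0$) reduces $h^2F$ to a block-diagonal matrix with a scalar block and an $(n-1)\times(n-1)$ block, both still $\succeq 0$ on $K$; the scalar block is handled by the known $n=1$ case on all of $K$ at once, and the smaller block by induction. This yields Proposition~\ref{pomozna}: for every $x_0$ there is $h$ with $h(x_0)\neq 0$ and $h^2F\in M^n_S$. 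Since the ideal generated by all such $h^2$ has no common zero, it is the unit ideal, and Scheiderer's Proposition~\ref{basic-lemma-2} produces $\sum s_jh_j^2=1$ with each $s_j>0$ on $K$ (hence $s_j\in M^1_S$), giving $F=\sum s_jh_j^2F\in M^n_S$. The key point is that the scalar saturation is invoked \emph{globally} on $K$ inside the induction, so no inter-component patching is ever needed.
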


The main ingredients in the proof of Theorem \ref{nasicenost-na-R} are:
\begin{enumerate}
	\item	The $n=1$ case \cite[Theorem 5.17]{Scheiderer3}.
	\item The ``$h^2F$-proposition'' (See Proposition \ref{pomozna} below. 
		The proof uses the idea of diagonalizing 
		matrix polynomials from \cite[4.3]{Sch}.).
	\item Getting rid of $h^2$ in ``$h^2F$-proposition'' (The proof uses 
		\cite[Proposition 2.7]{Scheiderer}, which is Proposition \ref{basic-lemma-2} below.).
\end{enumerate}

\subsection{``$h^2F$-proposition''}

We call the following result $``h^2F$-proposition''.

\begin{proposition} \label{pomozna}
	Suppose $K$ is a non-empty compact semialgebraic set in $\RR$ with a saturated description $S$. 
	Then, for any $F\in \HH_n(\CC[x])$ such that $F\succeq 0$ on $K$ and every point $x_0
	\in \CC$, there exists $h\in \RR[x]$ such that $h(x_0)\ne 0$ and $h^2F\in M^n_S$. 
\end{proposition}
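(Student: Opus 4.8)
The plan is to diagonalize $F$ over a suitable field of rational functions and reduce the matrix statement to the scalar case. Since $F \in \HH_n(\CC[x])$ with $F \succeq 0$ on $K$, we may work over the field $\RR(x)$ (or $\CC(x)$, accommodating the involution) and perform a congruence diagonalization: there is an invertible matrix $U(x)$ over $\CC(x)$ such that $U^\ast F U = \diag(d_1, \ldots, d_n)$ with $d_i \in \RR[x]$. Clearing denominators, we can arrange $U$ to have polynomial entries at the cost of a scalar factor: there is $g \in \RR[x]$, $g \ne 0$, and a polynomial matrix $V(x) \in M_n(\CC[x])$ with $g^2 F = V^\ast \diag(d_1, \ldots, d_n) V$ up to congruence, following the idea in \cite[4.3]{Sch}. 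By choosing the diagonalization carefully near $x_0$, one can ensure $g(x_0) \ne 0$, or even pick $U$ so that its denominator does not vanish at $x_0$.

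Next I would check that each diagonal entry $d_i$ is, up to a positive scalar multiple or a perfect square factor, nonnegative on $K$: since congruence over $\RR(x)$ preserves the signature of $F(x_0)$ at all points $x_0 \in K$ where everything is defined, and $F \succeq 0$ on $K$, each $d_i \geq 0$ on $K$ minus a finite set, hence (by continuity, the $d_i$ being polynomials) on all of $K$. Thus $d_i \in \Pos^1_{\succeq 0}(K)$. Now invoke the scalar $n=1$ result \cite[Theorem 5.17]{Scheiderer3} (equivalently the saturation of $M_S^1$ for a saturated description $S$ of the compact set $K$): each $d_i \in M_S^1 \subseteq M_S^n$. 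Since $M_S^n$ is closed under the congruence $\sigma \mapsto W^\ast \sigma W$ for $W \in M_n(\CC[x])$ (because hermitian squares are preserved and $W^\ast(\sum G_j^\ast G_j)W = \sum (G_j W)^\ast (G_j W)$, and multiplication by $g_k$ commutes past the congruence), we get $g^2 F = V^\ast \diag(d_1,\ldots,d_n) V \in M_S^n$. Setting $h := g$ finishes the argument, provided we secured $g(x_0) \ne 0$.

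The main obstacle I anticipate is the bookkeeping around the diagonalization at the prescribed point $x_0 \in \CC$: an arbitrary $\RR(x)$-congruence diagonalization introduces rational functions whose denominators one has no a priori control over, so they may well vanish at $x_0$. The fix is either (i) to perform the diagonalization step by step, at each stage choosing the pivot to be a diagonal entry of $F$ (or a polynomial combination) that does not vanish at $x_0$ — possible as long as $F(x_0) \ne 0$, and the case $F(x_0) = 0$ needs separate handling (e.g.\ one can perturb or use that near a zero of $F$ one can still extract a scalar factor not vanishing at $x_0$ from a Smith-type normal form), or (ii) after obtaining any diagonalization with scalar factor $\tilde g$, multiply through by a further square $p^2$ with $p(x_0) \ne 0$ chosen to absorb the bad denominators, i.e.\ replace $h$ by $p \tilde g$ and note the vanishing set of $h$ at $x_0$ can be avoided by a generic choice. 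A secondary technical point is getting the involution right over $\CC$: the congruence must use $U^\ast$, not $U^T$, so the diagonalization is a hermitian one (LDL$^\ast$-type), but this is standard and the diagonal entries remain in $\RR[x]$.
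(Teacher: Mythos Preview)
Your strategy is the paper's: congruence-diagonalize $F$ with polynomial data, reduce each diagonal entry to the scalar case via the saturation of $M_S^1$, and control the scalar denominator at the prescribed point $x_0$. The paper executes this by induction on $n$, performing one Schur-complement step at a time rather than a one-shot diagonalization over $\CC(x)$.

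Two comments on your handling of the $x_0$ obstacle. Your option (ii) does not work: if $\tilde g(x_0)=0$ then $(p\tilde g)(x_0)=0$ for every $p$, so multiplying by a further square cannot repair the vanishing. Your option (i) is the correct route, and the paper makes it precise in two moves. First, if $F(x_0)=0$, write $F=c^m G$ with $c(x)=x-x_0$ (or $(x-x_0)(x-\overline{x_0})$ when $x_0\notin\RR$) and $m$ maximal, so $G(x_0)\ne 0$; this is exactly the ``extract a scalar factor'' idea you gesture at, and since $c^m$ is either a square or a product of a square and an element of $M_S^1$, it causes no trouble. Second, once $G(x_0)\ne 0$ one still needs a \emph{diagonal} pivot nonzero at $x_0$: if all $g_{kk}(x_0)=0$ but some off-diagonal $g_{kl}(x_0)\ne 0$, conjugate by a constant unitary built from $\frac{1}{\sqrt 2}\left(\begin{smallmatrix}1&1\\1&-1\end{smallmatrix}\right)$ or $\frac{1}{\sqrt 2}\left(\begin{smallmatrix}1&i\\1&-i\end{smallmatrix}\right)$ in rows and columns $k,l$ to produce a new $(1,1)$ entry equal to $\frac12(g_{kk}+g_{ll})+\Re g_{kl}$ or $\frac12(g_{kk}+g_{ll})+\Im g_{kl}$, at least one of which is nonzero at $x_0$. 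With this pivot $a$ one has the explicit identity
\[
a^4\,F \;=\; V^\ast\,\diag\bigl(a^3,\;a(aC-\beta^\ast\beta)\bigr)\,V,
\]
and a companion identity showing the block $\diag(a^3,\,a(aC-\beta^\ast\beta))$ is itself a polynomial congruence of $F$, hence $\succeq 0$ on $K$. Induction on the lower-right $(n-1)\times(n-1)$ block then yields $h$ with $h(x_0)\ne 0$. Your sketch contains all of this in outline; what was missing was the concrete mechanism (factor out $c^m$, then unitary pivot) replacing the nonworking option (ii).
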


To prove Proposition \ref{pomozna}
we need Lemmas \ref{permutacijska-lema} and \ref{h-2-1-lema} below.

\begin{lemma} \label{permutacijska-lema}
	Let $G=[g_{kl}]_{kl}\in M_n\left(\CC\left[x\right]\right)$.
	For every $1\leq k\leq l\leq n$	there exist unitary matrices 
	$U_{kl}\in M_n(\RR)$ and $V_{kl}\in M_n(\CC)$ such that
		\begin{eqnarray*}
			U_{kl} G U_{kl}^\ast &=&
				\left[\begin{array}{cc} p_{kl} & \ast \\ \ast & \ast\end{array}\right],\quad
			V_{kl} G V_{kl}^\ast = 
				\left[\begin{array}{cc} r_{kl} & \ast \\ \ast & \ast\end{array}\right],
		\end{eqnarray*}
	where 
		\begin{eqnarray*}
			p_{kl}&=&
			\left\{\begin{array}{cc} g_{kl},& \text{for } 1\leq k=l\leq n\\
				\frac{1}{2}(g_{kl}+ g_{lk} + g_{kk} + g_{ll}),& \text{for } 1\leq k<l\leq n
				\end{array}\right.,\\
			r_{kl}&=& 
			\left\{\begin{array}{cc} g_{kl},& \text{for } 1\leq k=l\leq n\\
				\frac{i}{2}(-g_{kl}+ g_{lk}) + \frac{1}{2}(g_{kk} + g_{ll}),& \text{for } 1\leq k<l\leq n
				\end{array}\right..
		\end{eqnarray*}
\end{lemma}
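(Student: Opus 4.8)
The plan is to reduce everything to one elementary remark about unitary conjugation: if $W\in M_n(\CC)$ is unitary and $u:=W^\ast e_1\in\CC^n$ denotes the first column of $W^\ast$, then
$$(WGW^\ast)_{11}=e_1^\ast\,WGW^\ast\,e_1=(W^\ast e_1)^\ast\,G\,(W^\ast e_1)=u^\ast G u.$$
Conversely, every unit vector $u\in\CC^n$ can be completed to an orthonormal basis $u=u_1,u_2,\ldots,u_n$ of $\CC^n$, and then $W^\ast:=[\,u_1\ u_2\ \cdots\ u_n\,]$ defines a unitary $W$ with $(WGW^\ast)_{11}=u^\ast Gu$; moreover, if $u$ is real, the completing basis and hence $W$ may be chosen real, so that $W\in M_n(\RR)$. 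Thus the lemma follows as soon as, for each pair $k\le l$, we produce an explicit real unit vector $u$ with $u^T G u=p_{kl}$ (this gives $U_{kl}$) and an explicit complex unit vector $v$ with $v^\ast G v=r_{kl}$ (this gives $V_{kl}$).

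For the diagonal case $k=l$ I would take $u=v=e_k$, since $e_k^\ast G e_k=g_{kk}$; the corresponding $W$ is the real symmetric permutation matrix transposing the coordinates $1$ and $k$, which lies in $M_n(\RR)$ and serves as both $U_{kk}$ and $V_{kk}$. For $k<l$ I would take $u=\tfrac{1}{\sqrt2}(e_k+e_l)\in\RR^n$ and $v=\tfrac{1}{\sqrt2}(e_k-ie_l)\in\CC^n$, both of norm $1$, and expand:
$$u^T G u=\tfrac12(e_k+e_l)^T G(e_k+e_l)=\tfrac12\bigl(g_{kk}+g_{kl}+g_{lk}+g_{ll}\bigr)=p_{kl},$$
$$v^\ast G v=\tfrac12(e_k+ie_l)^T G(e_k-ie_l)=\tfrac12\bigl(g_{kk}+g_{ll}\bigr)+\tfrac{i}{2}\bigl(-g_{kl}+g_{lk}\bigr)=r_{kl}.$$
To obtain $U_{kl}$ I would complete $u$ to the real orthonormal basis consisting of $u$, $\tfrac{1}{\sqrt2}(e_k-e_l)$ and the vectors $e_j$ with $j\notin\{k,l\}$; to obtain $V_{kl}$ I would complete $v$ to the orthonormal basis consisting of $v$, $\tfrac{1}{\sqrt2}(e_k+ie_l)$ and the same $e_j$'s, the only inner product requiring verification being $\langle e_k-ie_l,\,e_k+ie_l\rangle=1+i^2=0$. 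Reading off the $(1,1)$-entries via the remark above then yields exactly $p_{kl}$ and $r_{kl}$.

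I do not expect a genuine obstacle: the statement is pure finite-dimensional linear algebra, valid entrywise over the commutative ring $\CC[x]$, with constant conjugating matrices. The two points that need care are the bookkeeping of the two involutions (ordinary transpose versus conjugate transpose), so that the signs in $r_{kl}$ come out as stated, and ensuring that in the $U$-case the orthonormal completion is taken over $\RR$ rather than over $\CC$. I would present the argument in the order above: first the unitary-conjugation remark, then the diagonal case, and finally the off-diagonal case with the explicit vectors and their completions.
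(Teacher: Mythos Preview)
Your proposal is correct and is essentially the same argument as the paper's. The paper simply writes down the explicit matrices $U_{kl}=P_kS_{kl}$ and $V_{kl}=P_k\tilde S_{kl}$ (where $P_k$ swaps rows $1$ and $k$, and $S_{kl},\tilde S_{kl}$ act only on coordinates $k,l$) and leaves the verification implicit; your version explains the underlying mechanism---that only the first column of $W^\ast$ matters for the $(1,1)$-entry---and then produces the very same vectors $\tfrac{1}{\sqrt2}(e_k+e_l)$ and $\tfrac{1}{\sqrt2}(e_k-ie_l)$ and the very same orthonormal completions, so the resulting matrices coincide with the paper's.
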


\begin{proof}
	We define $U_{11}=V_{11}:=I_n$, $U_{kk}=V_{kk}:=P_k$ for $k = 2, \ldots, n$,
	where $P_k$ denotes the permutation matrix which permutes the first row and the $k$-th row.
	
	For $1\leq k<l\leq n$, define $U_{kl}:=P_k S_{kl}$ where 
	$S_{kl}=\left(s_{pr}^{(kl)}\right)_{pr}\in M_{n}(\RR)$
	is the matrix with 
		$s^{(kl)}_{kk} = s^{(kl)}_{kl} = s^{(kl)}_{lk} =\frac{1}{\sqrt{2}}$, 
		$s^{(kl)}_{ll} = -\frac{1}{\sqrt{2}}$,	
		$s^{(kl)}_{pp} = 1$ if $p \notin \left\{k, l\right\}$ and 
		$s^{(kl)}_{pr} =0$ otherwise.
	
	For $1\leq k<l\leq n$, define $V_{kl}:=P_k \tilde S_{kl}$ where 
	$\tilde S_{kl}=\left(\tilde s_{pr}^{(kl)}\right)_{pr}\in M_{n}(\CC)$
	is the matrix with 
		$\tilde s^{(kl)}_{kk} = \tilde s^{(kl)}_{lk} =\frac{1}{\sqrt{2}}$, 
		$\tilde s^{(kl)}_{kl} = \frac{i}{\sqrt{2}}$,	
		$\tilde s^{(kl)}_{ll} = -\frac{i}{\sqrt{2}}$,
		$\tilde s^{(kl)}_{pp} = 1$ if $p \notin \left\{k, l\right\}$
		and $\tilde s^{(kl)}_{pr}=0$ otherwise.	
\end{proof}

\begin{lemma} \label{h-2-1-lema}
	For $F=\left[\begin{array}{cc} a & \beta \\ \beta^{\ast} & \fC\end{array}\right]\in
	\HH_n\left(\CC\left[x\right]\right)$ where 
	$a=a^\ast\in \RR\left[x\right]$,
	$\beta \in M_{1,n-1}\left(\CC\left[x\right]\right)$ and 
	$\fC\in \HH_{n-1}\left(\CC\left[x\right]\right)$ it holds that
		\begin{eqnarray*}
		&\text{(i)}& a^4 \cdot F=
		\left[\begin{array}{cc} a^\ast & 0 \\ \beta^{\ast} & a^\ast I_{n-1}\end{array}\right]
		\left[\begin{array}{cc} a^3 & 0 \\ 0 & a(a\fC- \beta^\ast \beta)\end{array}\right]
		\left[\begin{array}{cc} a & \beta \\ 0 & a I_{n-1}\end{array}\right].\\
		&\text{(ii)}&
		\left[\begin{array}{cc} a^3 & 0 \\ 0 & a(a\fC- \beta^\ast \beta)\end{array}\right]=
		\left[\begin{array}{cc} a^\ast & 0 \\ -\beta^{\ast} & a^\ast I_{n-1}\end{array}\right]
		\cdot F\cdot
		\left[\begin{array}{cc} a & -\beta \\ 0 & a I_{n-1}\end{array}\right].
		\end{eqnarray*}
\end{lemma}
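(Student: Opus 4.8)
The plan is to verify both identities by direct block matrix multiplication; the only thing to keep track of is that $a$ is a \emph{scalar} polynomial with $a^\ast=a$, so it commutes with the block entries $\beta$ and $\fC$ and every cross term of the form $\beta^\ast a-a\beta^\ast$ vanishes.

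I would first check (ii). Multiplying the left pair of factors,
\[
\left[\begin{array}{cc} a^\ast & 0 \\ -\beta^{\ast} & a^\ast I_{n-1}\end{array}\right]
\left[\begin{array}{cc} a & \beta \\ \beta^{\ast} & \fC\end{array}\right]
=\left[\begin{array}{cc} a^2 & a\beta \\ 0 & a\fC-\beta^\ast\beta\end{array}\right],
\]
where the $(2,1)$ block is $-\beta^\ast a+a^\ast\beta^\ast=0$. Multiplying this on the right by $\left[\begin{smallmatrix} a & -\beta \\ 0 & aI_{n-1}\end{smallmatrix}\right]$ then annihilates the $(1,2)$ block (it becomes $-a^2\beta+a^2\beta=0$) and leaves the diagonal blocks equal to $a^3$ and $a(a\fC-\beta^\ast\beta)$, which is precisely the left-hand side of (ii).

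For (i) I would sandwich identity (ii) between $L:=\left[\begin{smallmatrix} a^\ast & 0 \\ \beta^{\ast} & a^\ast I_{n-1}\end{smallmatrix}\right]$ on the left and $R:=\left[\begin{smallmatrix} a & \beta \\ 0 & aI_{n-1}\end{smallmatrix}\right]$ on the right. A one-line computation, again using $a^\ast=a$, gives
\[
L\cdot\left[\begin{array}{cc} a^\ast & 0 \\ -\beta^{\ast} & a^\ast I_{n-1}\end{array}\right]=a^2I_n,
\qquad
\left[\begin{array}{cc} a & -\beta \\ 0 & aI_{n-1}\end{array}\right]\cdot R=a^2I_n,
\]
so multiplying (ii) on the left by $L$ and on the right by $R$ turns its right-hand side into $(a^2I_n)\,F\,(a^2I_n)=a^4F$, while its left-hand side becomes exactly the right-hand side of (i). Equivalently, one can simply expand the product of the three matrices on the right of (i) directly.

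There is essentially no obstacle here: the statement is just the block $LDL^\ast$/Schur-complement factorization of $F$ with denominators cleared by the scalar $a$, and the only point requiring a moment's care is that $a$ is a scalar with $a^\ast=a$, which is what makes all the off-diagonal cross terms cancel.
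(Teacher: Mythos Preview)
Your proof is correct and is exactly the ``easy computation'' the paper alludes to: both identities are verified by direct block multiplication, and your use of (ii) together with the observation $L\cdot\left[\begin{smallmatrix} a^\ast & 0 \\ -\beta^{\ast} & a^\ast I_{n-1}\end{smallmatrix}\right]=a^2I_n=\left[\begin{smallmatrix} a & -\beta \\ 0 & aI_{n-1}\end{smallmatrix}\right]\cdot R$ to deduce (i) is a clean way to avoid repeating the arithmetic.
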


\begin{proof}
	Easy computation.
\end{proof}

\begin{proof}[Proof of Proposition \ref{pomozna}]
	The proof is by induction on the size $n$ of the matrix polynomials. For $n=1$ the proposition
	holds by the scalar case (We take $h=1$ and use \cite[Theorem 5.17]{Scheiderer3} and
	\cite[Corollary 4.4]{Scheiderer4}.). Suppose the proposition
	holds for $n-1$. We will prove that it holds for $n$.
	Let us take $F:=[f_{kl}]_{kl}\in \HH_n(\CC[x])$ where $F\succeq 0$ on $K$.
	Let us define
		$$c(x):=\left\{ 
			\begin{array}{rl} 
				x-x_0,& x_0\in  \RR \\ 
				(x-x_0)(x-\overline{x_0}), & 	x_0\in \CC\setminus \RR			
			\end{array} 
						\right..$$
	If $F\equiv 0$, we can take $h=1$. 
	Otherwise $F\not\equiv 0$ and we write 
		$$F=c^m G,$$ 
	where $m\in \NN\cup \{0\}$, $G=[g_{kl}]_{kl}\in \HH_n\left(\CC\left[x\right]\right)$
	and 
		\begin{equation}\label{G} G(x_0)=[g_{kl}(x_0)]_{kl}\neq 0. \end{equation}	
	
\noindent \textbf{Claim.} One of the following two cases applies:
\begin{description}[0.5cm]
		\item[Case 1:] 
			$g_{k_0k_0}(x_0)\neq 0 \text{ for some } k_0\in \{1,\ldots,n\}.$
		\item[Case 2:] $g_{kk}(x_0)= 0$ for all $k\in \{1,\ldots,n\}$ 
			and	for some $1\leq k_0<l_0\leq n$ we have
				$$\Re(g_{k_0 l_0})(x_0)\neq 0\quad \text{or}\quad \Im(g_{k_0 l_0})(x_0)\neq 0,$$ 
			where $\Re(g_{k_0 l_0}):=\frac{g_{k_0 l_0}+\overline{g_{k_0 l_0}}}{2}\in \RR[x]$ and
			$\Im(g_{k_0 l_0}):=\frac{g_{k_0 l_0}-\overline{g_{k_0 l_0}}}{2i}\in \RR[x]$.
\end{description}
	
\noindent	\textsl{Proof of Claim.} Let us assume that none of the two cases applies. Then $\Re(g_{kl})(x_0)=
	\Im(g_{kl})(x_0)= 0$ for all $1\leq k\leq l\leq n$. Let us take $l<k$. Since $G\in \HH_n\left(\CC\left[x\right]\right)$ is hermitian, it follows 
	that $g_{lk} = \overline{g_{kl}}=\Re g_{kl}-i\cdot\Im g_{kl}$.
	Therefore	$g_{lk}(x_0)	= \Re{g_{kl}}(x_0)-i\cdot \Im{g_{kl}}(x_0)=0.$
	Hence $g_{kl}(x_0)=0$ for all $k,l\in \{1,\ldots,n\}$. This is a contradiction with (\ref{G}) and proves Claim.\\
	
	Let $U_{kl}$, $V_{kl}$, $p_{kl}$, $r_{kl}$ be as in Lemma \ref{permutacijska-lema}.
	We study each case from Claim separately:\\
		
		\noindent \textbf{Case 1:} We define $T_{k_0k_0}:=U_{k_0k_0}$, $\tilde{g}_{k_0k_0}:=g_{k_0k_0}.$
		Notice that $\tilde{g}_{k_0k_0}(x_0)=g_{k_0k_0}(x_0)\neq 0$.\\
	
		\noindent \textbf{Case 2:}	We will separate three subcases:
			
		\indent \textsl{Subcase 2.1.} $p_{k_0l_0}(x_0)\neq 0$: We define
				$T_{k_0l_0}:=U_{k_0l_0}, \tilde{g}_{k_0l_0}:=p_{k_0l_0}.$
		Notice that $\tilde{g}_{k_0l_0}(x_0)\neq 0$.
		
		\indent \textsl{Subcase 2.2.} $r_{k_0l_0}(x_0)\neq 0$: We define
				$T_{k_0l_0}:=V_{k_0l_0}, \tilde{g}_{k_0l_0}:=r_{k_0l_0}.$
		Notice that $\tilde{g}_{k_0l_0}(x_0)\neq 0$.
		
		\indent \textsl{Subcase 2.3.} $p_{k_0l_0}(x_0)=r_{k_0l_0}(x_0)=0$:
			We will prove that this subcase does not happen. By definition and assumptions we have
				\begin{eqnarray*} 
					p_{k_0l_0}(x_0)&=& \frac{1}{2}(g_{k_0l_0}+ g_{l_0k_0} + g_{k_0k_0} + g_{l_0l_0})(x_0)=
						\frac{1}{2}(g_{k_0l_0}+ g_{l_0k_0})(x_0)=\\
							&=&(\Re g_{k_0l_0})(x_0)\\
					r_{k_0l_0}(x_0)&=& \frac{i}{2}(-g_{k_0l_0}+ g_{l_0k_0})(x_0) + \frac{1}{2}(g_{k_0k_0} + g_{l_0l_0})(x_0)=\\
						&=&\frac{i}{2}(-g_{k_0l_0}+ g_{l_0k_0})(x_0)=(\Im g_{k_0l_0})(x_0)
				\end{eqnarray*} 
			Since we are in Case 2, $(\Re g_{k_0l_0})(x_0)\neq 0$ or $(\Im g_{k_0l_0})(x_0)\neq 0$.
			Contradiction. Hence Subcase 2.3 never happens.\\
			
	To avoid repetition in what follows we define $k_0=l_0$ if we are in Case 1.
	If we write
	$T_{k_0l_0} G T_{k_0l_0}^\ast=\left[\begin{array}{cc} \tilde{g}_{k_0l_0} & \tilde \beta \\
	\tilde \beta^\ast & \tilde \fC\end{array}\right]$
	with $\tilde \beta\in M_{1,n-1}\left(\CC\left[x\right]\right)$ and 
	$\tilde \fC\in M_{n-1}\left(\CC\left[x\right]\right)$, then
	$T_{k_0l_0} F T_{k_0l_0}^\ast
			=
			\left[\begin{array}{cc} c^m\tilde g_{k_0l_0} & \fp^m\tilde \beta \\
			(\fp^m\tilde \beta)^\ast & \fp^m \tilde \fC\end{array}\right]=:
			\left[\begin{array}{cc} a & \beta \\ \beta^{\ast} & \fC\end{array}\right].$
	Therefore by part $(i)$ of Lemma \ref{h-2-1-lema} and dividing by $c^{4m},$
	it follows that 
		\begin{eqnarray*}
			\tilde g^2  F &=& 
			T_{k_0l_0}^\ast\left[\begin{array}{cc} 
		\tilde{g}_{k_0l_0}^\ast & 0\\
		\tilde\beta^\ast & 
		\tilde{g}_{k_0l_0}^\ast I_{n-1}
		\end{array}\right]
		\left[\begin{array}{cc} 
		d & 0\\
		0 & D
		\end{array}\right]
		\left[\begin{array}{cc} 
		\tilde{g}_{k_0l_0} & \tilde\beta\\
		0 & \tilde{g}_{k_0l_0}I_{n-1}
		\end{array}\right] T_{k_0l_0},
		\end{eqnarray*}
	where 
		\begin{eqnarray*}
			\tilde g &=& \tilde g_{k_0l_0}^2 
				\in \HH_1\left(\CC\left[x\right]\right)=\RR[x]\\
			d &=&	c^m\tilde{g}_{k_0l_0}^3\in \HH_1\left(\CC\left[x\right]\right)=\RR[x],\\
			D &=&	c^m\tilde{g}_{k_0l_0}\left(\tilde{g}_{k_0l_0}\tilde \fC 
							-\tilde \beta^\ast\tilde\beta\right)\in \HH_{n-1}\left(\CC\left[x\right]\right).
		\end{eqnarray*}
	By part $(ii)$ of Lemma \ref{h-2-1-lema} and dividing by $c^{2m}$, we 
	have also
	\begin{eqnarray*}
		\left[\begin{array}{cc} 
		d & 0\\
		0 & D
		\end{array}\right] &=&
		\left[\begin{array}{cc} 
		\tilde{g}_{k_0l_0}^\ast & 0\\
		-\tilde\beta^\ast & 
		\tilde{g}_{k_0l_0}^\ast I_{n-1}
		\end{array}\right] T_{k_0 l_0}
		F T_{k_0l_0}^\ast
		\left[\begin{array}{cc} 
		\tilde{g}_{k_0l_0} & -\tilde\beta\\
		0 & \tilde{g}_{k_0l_0} I_{n-1}
		\end{array}\right]
	\end{eqnarray*}
		It follows that $\fd\geq 0$, $\fD\succeq 0$ on $K$. By the induction hypothesis used for 
		the polynomial $\fD\in \HH_{n-1}\left(\CC\left[x\right]\right)$,
	there exists $h_1\in \RR\left[x\right]$ such that 
	$h_1(x_0)\neq 0$ and 
	$h_1^2 D\in M^{n-1}_{S}$. By the scalar case 
	\cite[Theorem 5.17]{Scheiderer3} and \cite[Corollary 4.4]{Scheiderer4},  
	$h_1^2 d\in M^{1}_{S}$.
	Hence
	$h^2 F \in M^n_{S}$
	where $h=h_1 \tilde g\in \RR[x]$ and $h(x_0)\neq 0$. This concludes the proof.
\end{proof}

\begin{remark} \label{opomba-o-stopnji}
	By keeping track on the degree of $h$ and using \cite[Theorem 4.1]{K-M-2}, we can prove more in Proposion \ref{pomozna} above. Namely,
	$h$ can be chosen of degree at most $\text{deg}(F)(3^n-1)$ and if $S=\{g_1,\ldots,g_s\}$ is the natural description of $K$, then 
	$F=\sum_{e\in \{0,1\}^s}\sigma_e \underline{g}^e\in T^n_S$ for some $\sigma_e\in M_n(\CC[x])^2$ with $\text{deg}(\sigma_e \underline{g}^e)\leq \text{deg}(h^2F).$
\end{remark}

	\subsection{ Getting rid of $h^2$ in ``$h^2F$-proposition''}
	
	To get rid of $h^2$ in ``$h^2F$-proposition'', which proves Theorem \ref{nasicenost-na-R}, we will use \cite[Proposition 2.7]{Scheiderer}: 
	
	\begin{proposition} \label{basic-lemma-2}
		Suppose $R$ is a commutative ring with $1$ and $\QQ\subseteq R$. 
		Let	$\Phi:R\to C(K,\RR)$ be a ring homomorphism, where $K$ is a topological space 
		which is compact and Hausdorff. Suppose $\Phi(R)$ separates points in $K$. Suppose 
		$f_1,\ldots, f_k\in R$ are such that $\Phi(f_j)\geq 0$, $j=1,\ldots, k$ and
		$(f_1,\ldots,f_k)=(1)$. Then there exist $s_1,\ldots, s_k\in R$ such that $s_1f_1+\ldots+s_k f_k=1$
		and such that each $\Phi(s_j)$ is strictly positive.
	\end{proposition}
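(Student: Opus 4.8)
The plan is to split the statement into an easy analytic fact about $C(K,\RR)$, a Stone--Weierstrass density argument, and a single algebraic correction. Fix first, using $(f_1,\dots,f_k)=(1)$, elements $a_1,\dots,a_k\in R$ with $\sum_{j=1}^k a_jf_j=1$, and abbreviate $\phi_j:=\Phi(f_j)$, $\alpha_j:=\Phi(a_j)$ in $C(K,\RR)$. The crucial elementary observation is that $\Psi:=\sum_{j=1}^k\phi_j$ is \emph{strictly} positive on $K$: for each $x\in K$ the identity $\sum_j\alpha_j(x)\phi_j(x)=1$ prevents all the nonnegative numbers $\phi_j(x)$ from vanishing, so $\Psi(x)>0$, and then $\Psi$ attains a positive minimum on the compact set $K$. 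In particular $1/\Psi\in C(K,\RR)$ is strictly positive and $\sum_j(1/\Psi)\,\phi_j=1$, so strictly positive "coefficients" already exist \emph{inside} $C(K,\RR)$; the only obstruction is that $1/\Psi$ need not lie in $\Phi(R)$.

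To remedy this I would invoke Stone--Weierstrass: since $\QQ\subseteq R$ forces $\QQ\subseteq\Phi(R)$, since $\Phi(1)=1$, and since $\Phi(R)$ separates points of the compact Hausdorff space $K$ by hypothesis, the uniform closure of $\Phi(R)$ is a closed subring of $C(K,\RR)$ containing $\RR$ and separating points, hence all of $C(K,\RR)$. Thus for any prescribed $\delta>0$ there is $t\in R$ with $\|\Phi(t)-1/\Psi\|_\infty<\delta$. Writing $g:=\sum_j f_j$ and $u:=tg$, we get $\Phi(u)=\Phi(t)\,\Psi=1+\eta$ with $\|\eta\|_\infty\le\delta\|\Psi\|_\infty$.

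The correction step is then purely formal. Set $s_j:=t+(1-u)\,a_j\in R$ for $j=1,\dots,k$. Using $u=tg$ and $\sum_j a_jf_j=1$ one computes $\sum_j s_jf_j = tg+(1-u)\cdot 1 = 1$, so the required relation holds exactly in $R$. On the other hand $\Phi(s_j)=\Phi(t)-\eta\,\alpha_j$, which differs from $1/\Psi$ in sup-norm by at most $\delta\big(1+\|\Psi\|_\infty\max_i\|\alpha_i\|_\infty\big)$; choosing $\delta$ small enough — the quantities $\|\Psi\|_\infty$, $\max_i\|\alpha_i\|_\infty$ and $\min_K(1/\Psi)$ are all fixed before $\delta$ is selected — makes this bound smaller than $\min_K(1/\Psi)>0$, so $\Phi(s_j)>0$ on $K$ for every $j$.

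The argument is short once one sees that $\Psi>0$; the genuinely non-mechanical idea is the decoupling, namely to first exhibit strictly positive coefficients in the ambient algebra (the trivial choice $1/\Psi$), then pull back into $\Phi(R)$ by density and absorb the resulting perturbation with the linear correction $s_j=t+(1-u)a_j$ built from the original relation $\sum a_jf_j=1$. The two points deserving a careful write-up are the Stone--Weierstrass application in the form "a point-separating subring containing $\QQ$ is uniformly dense in $C(K,\RR)$" and the bookkeeping that lets the tolerance $\delta$ be chosen last.
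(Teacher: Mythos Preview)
Your argument is correct. The key steps---$\Psi:=\sum_j\Phi(f_j)>0$ on $K$ from the relation $\sum_j a_jf_j=1$ and the nonnegativity of $\Phi(f_j)$; Stone--Weierstrass density of $\Phi(R)$ in $C(K,\RR)$ via $\QQ\subseteq R$; and the correction $s_j=t+(1-tg)a_j$ which makes $\sum_j s_jf_j=1$ exact while keeping each $\Phi(s_j)$ within $\delta\bigl(1+\|\Psi\|_\infty\max_i\|\alpha_i\|_\infty\bigr)$ of the strictly positive function $1/\Psi$---all check out. The only point to be explicit about in a careful write-up is that $\Phi$ is unital (which is implicit in ``$\QQ\subseteq R$'' together with the separating-points hypothesis), so that $\Phi\bigl(\sum_j a_jf_j\bigr)=1$ really gives $\sum_j\alpha_j\phi_j=1$ pointwise.

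There is nothing in the paper to compare against: Proposition~\ref{basic-lemma-2} is not proved here but quoted verbatim from Scheiderer \cite[Proposition~2.7]{Scheiderer} and then used as a black box in the proof of Theorem~\ref{nasicenost-na-R}. Your proof thus supplies a self-contained argument where the paper gives none; it is essentially the same Stone--Weierstrass-plus-correction proof one finds in Scheiderer's original, with the pleasant simplification of taking a single approximant $t\approx 1/\Psi$ for all indices at once and then adjusting each coordinate by $(1-u)a_j$.
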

	
	\begin{proof}[Proof of Theorem \ref{nasicenost-na-R}]
		By \cite[Corollary 4.4]{Scheiderer4} and \cite[Theorem 5.17]{Scheiderer3},
		$M_S^1$ is saturated if and only if $S$ is a saturated description of $K$.
		Therefore we have to prove only the if part. Let
		$S$ be a saturated description of $K$. 
		We will prove that $M^n_S$ is saturated for every $n\in \NN$.
		Let $R:=\RR[x]$ and $\Phi:R\rightarrow C(K,\RR)$ be the natural map, i.e.,\ $\Phi(f)=f|_K$.
		Take $F\in \Pos_{\succeq 0}^n(K)$. We will prove that $F\in M^n_S$. Let
			$I:=\left\langle h^2\in \RR[x]\colon h^2 F\in M^n_S\right\rangle$
		be the ideal in $\RR[x]$ generated by all $h^2$ where $h\in \RR[x]$ is such that $h^2F\in M^n_S$.
		Since $\RR[x]$ is a principal ideal domain, there exists a polynomial
		$p\in \RR[x]$ such that $I=\left\langle p \right\rangle$. If $I$ was a proper ideal, all its elements would have a common zero $x_0\in \CC$.
		By Proposition \ref{pomozna}, there exists $h\in \RR[x]$ such that
		$h(x_0)\neq 0$ and $h^2 F\in M^n_S$. Since $h$ belongs to $I$, it follows that 
		$I$ is not a proper ideal and hence $I=\RR[x]$.
		By Proposition \ref{basic-lemma-2}, there exist $s_1,\ldots,s_k \in \Pos_{\succ 0}^1(K)$ and
		$h_1,\ldots,h_k \in I$ such that
		$\sum_{j=1}^k s_jh_j^2=1$. Hence $\sum_{j=1}^k s_jh_j^2 F=F\in M^n_S$,
		which concludes the proof.
	\end{proof}

\begin{remark}	\label{drugi-dokaz}
\begin{enumerate}
	\item
	There is another proof of Theorem \ref{nasicenost-na-R} which uses Proposition \ref{pomozna} 
	just for the boundary points of $K$. We outline the main idea.
	There exists $h\in \RR[x]$ such that 
	$h\in \Pos_{\succeq 0}^1(\RR)$, $h(x_0)>0$ for every boundary point of $K$ and $hF\in M_S^n$ 
	(Take $h=\sum_{x_0\in \partial K} h_{x_0}^2$ where $\partial K$ is the boundary of $K$ and 
	$h_{x_0}$ is the polynomial from Proposition \ref{pomozna} for the point $x_0$.).
	Now multiply every member of the set $S$ by $h$ to obtain the set $S_1$ which satisfies conditions of 
	\cite[Corollary 5.17]{Scheiderer3}. Thus $M^1_S=M^1_{S_1}$ and $hF\in M_{S_1}^n$. This means there 
	exist $\sigma_j\in \sum M_n(\CC[x])^2$ such that $hF=\sigma_0+ \sigma_1 hg_1+\ldots+\sigma_s hg_s.$
	From here it is easy to see that $F=\tau_0+\sigma_1 g_1+\ldots+\sigma_s g_s$ for some 
	$\tau_0\in \sum M_n(\CC[x])^2$ and hence $F\in M_S^n$.
	\item By Remark \ref{opomba-o-stopnji}, the degree of $h$ in Proposition \ref{pomozna} and the degrees of summands in the 
	expression of $h^2F$ as the element of the preordering $T^n_S$ generated by the natural description $S$ of $K$ can be bounded by the degree of $F$ and $n$. It would be interesting to know if
	the same holds for $F$ and an arbitrary compact set $K$. It can be shown this is true for a finite set $K$. The degrees can be bounded by $\max(\text{deg}(F),|K|-1)$.
\end{enumerate}
\end{remark}

	\section{Unbounded sets $K$ without saturated $T_S^2$ for any finite sets $S$ with $K_S=K$}
	
	The answer to the question of Problem' for unbounded sets $K$ is positive for an unbounded interval by
	Theorem \ref{zvezni-fejer-riesz} (if $K=\RR$) and	\cite[Theorem 8]{Sch-Sav} (if $K=[a,\infty)$). 
	It is also easy to derive a positive answer for a union of two unbounded intervals from the case 
	$K=[a,b]$:
	
	\begin{proposition}\label{dva intervala}
		Let $K=(-\infty,a]\cup [b,\infty)$ be a union of two unbounded intervals where
		$a,b\in \RR$ and $a<b$.
		Then the quadratic module $M^n_{\{(x-a)(x-b)\}}$ is saturated for every $n\in \NN$.
	\end{proposition}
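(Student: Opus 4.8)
The plan is to reduce the two–ray situation to the compact interval $[-1,1]$ by the reciprocal substitution $x\mapsto 1/x$ and then apply Theorem~\ref{nasicenost-na-R}. First I would normalise. The affine substitution $u=\tfrac{2x-(a+b)}{b-a}$ is an involution-preserving ring isomorphism $\CC[x]\to\CC[u]$ that carries $K=(-\infty,a]\cup[b,\infty)$ onto $\RR\setminus(-1,1)$ and the generator $(x-a)(x-b)$ onto $\tfrac{(b-a)^2}{4}(u^2-1)$, a positive multiple of $u^2-1$. Since scaling a generator by a positive constant does not change the quadratic module, it suffices to prove that $M^n_{\{x^2-1\}}$ is saturated, i.e.\ that every $F\in\HH_n(\CC[x])$ with $F\succeq 0$ on $K:=\{x\in\RR:|x|\ge 1\}=K_{\{x^2-1\}}$ lies in $M^n_{\{x^2-1\}}$ (the reverse inclusion being trivial).

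Given such an $F$, fix a positive even integer $N\ge\deg F$ and put $\widetilde F(y):=y^{N}F(1/y)=\sum_{m}F_m\,y^{N-m}\in\HH_n(\CC[y])$, a hermitian matrix polynomial of degree $\le N$. For $y\in[-1,1]\setminus\{0\}$ one has $|1/y|\ge 1$, hence $F(1/y)\succeq 0$, and $y^N\ge 0$, so $\widetilde F(y)\succeq 0$; by continuity $\widetilde F\succeq 0$ on all of $[-1,1]$. Now $\{1-y^2\}$ is a saturated description of $[-1,1]=K_{\{1-y^2\}}$: the polynomial $1-y^2$ vanishes at the endpoints $\mp 1$ and has derivative $\pm 2$ there, which has the sign demanded by conditions (a), (b). Hence Theorem~\ref{nasicenost-na-R}, applied on $[-1,1]$ and combined with the degree estimate built into the matrix Fej\'er--Riesz theorem on an interval (\cite{ds}, \cite{Sch-Sav}), gives
$$\widetilde F(y)=\sigma_0(y)+\sigma_1(y)(1-y^2),\qquad \sigma_0,\sigma_1\in\textstyle\sum M_n(\CC[y])^2,$$
with $\deg\sigma_0\le N$ and $\deg\sigma_1\le N-2$.

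Then I would reciprocate back. Since $\deg F\le N$, the identity $F(x)=x^{N}\widetilde F(1/x)$ holds, so
$$F(x)=x^{N}\sigma_0(1/x)+x^{N-2}\sigma_1(1/x)\,(x^2-1).$$
Writing $\sigma_0=\sum_j p_j^{\ast}p_j$ with $\deg p_j\le N/2$ and $\sigma_1=\sum_j q_j^{\ast}q_j$ with $\deg q_j\le N/2-1$, and using that the powers $x^{N/2}$ and $x^{N/2-1}$ are fixed by the involution, one gets $x^{N}\sigma_0(1/x)=\sum_j(x^{N/2}p_j(1/x))^{\ast}(x^{N/2}p_j(1/x))$ and $x^{N-2}\sigma_1(1/x)=\sum_j(x^{N/2-1}q_j(1/x))^{\ast}(x^{N/2-1}q_j(1/x))$, both genuine sums of hermitian squares of matrix polynomials. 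Hence $F\in M^n_{\{x^2-1\}}$, which is what was needed.

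The one delicate point is the degree bookkeeping in the middle step: without $\deg\sigma_0\le N$ and $\deg\sigma_1\le N-2$ the reciprocated expressions $x^{N}\sigma_0(1/x)$ and $x^{N-2}\sigma_1(1/x)$ would only be Laurent polynomials and the final display would be meaningless. This is why one must invoke the sharp-degree form of matrix Fej\'er--Riesz on $[-1,1]$, not merely the set-theoretic saturation of $M^n_{\{1-y^2\}}$ coming from Theorem~\ref{nasicenost-na-R} (indeed, Remark~\ref{drugi-dokaz}(2) notes that no such degree bound is claimed for a general compact $K$). It is also essential to use the single-generator description $\{1-y^2\}$ of $[-1,1]$: with the two-generator description $\{1-y,1+y\}$ the reciprocated generators are proportional to $x-1$ and $x+1$, which do not belong to $M^n_{\{x^2-1\}}$, and the argument would break down.
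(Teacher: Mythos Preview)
Your proof is correct and essentially identical to the paper's: normalize, apply the reciprocal substitution $x\mapsto 1/x$ to reduce to $[-1,1]$, invoke the degree-bounded matrix Fej\'er--Riesz theorem on the interval from \cite{ds}, then substitute back. The paper skips the (as you rightly note, by itself insufficient) detour through Theorem~\ref{nasicenost-na-R} and appeals directly to \cite[Theorem~2.5]{ds}, using the identity $1\pm x=\tfrac12\bigl((1\pm x)^2+(1+x)(1-x)\bigr)$ to pass from the two-generator description $\{1+x,1-x\}$ of $[-1,1]$ to the single generator $1-x^2$; this yields a single hermitian square $G_1^\ast G_1$ with $\deg G_1\le\deg(F)/2$ directly, making your step ``write $\sigma_0=\sum_j p_j^\ast p_j$ with $\deg p_j\le N/2$'' immediate.
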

	
	\begin{proof}
				By a linear change of variables, we may assume that $K=(-\infty,-1]\cup[1,\infty)$.
				Note that $F\in \Pos^{n}_{\succeq 0}(K)$ is of even degree. 
				We define 
					$$F_1(x)=x^{\deg(F)} F\left(\frac{1}{x}\right)$$ 
				and observe that 
				$F_1\succeq 0$ on $[-1,1]$. By \cite[Theorem 2.5]{ds} and by the identity
					$$1\pm x=\frac{(1\pm x)^2+(x+1)(1-x)}{2},$$
				there exist matrix polynomials
				$G_1$, $H_1$ such that
					$$F_1(x)=G_1(x)^\ast G_1(x)+H_1(x)^\ast H_1(x) (x+1)(1-x),$$
					$$\deg(G_1)\leq \left\lfloor \frac{\deg(F_1)}{2}\right\rfloor\leq \frac{\deg(F)}{2},$$
					$$\deg(H_1)\leq \left\lfloor \frac{\deg(F_1)-1}{2}\right\rfloor\leq
						\left\lfloor \frac{\deg(F)-1}{2}\right\rfloor=\frac{\deg(F)}{2}-1.$$
				Therefore
				\begin{eqnarray*}
					F(x) &=& x^{\deg(F)}F_1(\frac{1}{x})\\
							 &=&
										x^{\deg(F)}
										(G_1(\frac{1}{x})^\ast G_1(\frac{1}{x})
										+H_1(\frac{1}{x})^\ast H_1(\frac{1}{x})
										(\frac{1}{x}+1)(1-\frac{1}{x})
										)\\
				&=:& G(x)^\ast G(x)+ H(x)^\ast H(x) (1+x)(x-1),
	\end{eqnarray*}
		where 
			$$G(x):=x^{\frac{\deg(F)}{2}}G_1\left(\frac{1}{x}\right),\quad
			H:=x^{\frac{\deg(F)}{2}-1}H_1\left(\frac{1}{x}\right)$$ 
		are matrix polynomials. 
	\end{proof}
	
	The negative answer to the question of Problem' 
	for almost all remaining unbounded sets $K$ (except 
	for a union of an unbounded interval and a point 
	or a union of two unbounded intervals and a point) and all 
	$n\geq 2$ is the main result of this section.

	\begin{theorem} \label{protiprimeri-za-sibko-omejenostno-nasicenost-R}
		Let an unbounded closed semialgebraic set $K\subseteq \RR$ satisfy either of the 
		following:
		\begin{enumerate}
			\item $K$ contains at least two intervals with at least one of them bounded.
			\item $K$ is a union of an unbounded interval and $m$ isolated points with $m\geq 2$.
			\item $K$ is a union of two unbounded intervals and $m$ isolated 
				points with $m\geq 2$.
		\end{enumerate}
			If $S\subset \RR[x]$ is a finite set with $K_S=K$, then the $2$-nd matrix preordering $T_S^2$ is 
		not saturated.
	\end{theorem}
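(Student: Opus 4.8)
The plan is to reduce to the natural description and then, in each of the three families, exhibit an explicit positive semidefinite $2\times 2$ matrix polynomial that does not lie in $T^2_S$, the non‑membership being proved by a local analysis at the ``thin'' part of $K$ combined with a degree argument at $\pm\infty$.

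\textbf{Reduction to the natural description.} Let $S=\{g_1,\dots,g_s\}$ be the natural description of $K$. By the first item of the Remark in the Introduction it suffices to show $T^2_S$ is not saturated: since $T^1_S=\Pos^1_{\succeq 0}(K)$, every $g\in\RR[x]$ with $K_{\{g\}}\supseteq K$ lies in $T^1_S$, so $T^2_{S_1}\subseteq T^2_S$ for every finite $S_1$ with $K_{S_1}=K$, and a witness of non‑saturation for $T^2_S$ is a witness for $T^2_{S_1}$ as well. Thus the task becomes: produce $F\in\Pos^2_{\succeq 0}(K)\setminus T^2_S$.

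\textbf{The common mechanism and the construction.} In all three cases $K$ has a bounded ``piece'' $C$ — a bounded interval component in (1), a pair of consecutive isolated points $\{p,q\}$ in (2) and (3) — flanked on both sides by gaps of $K$ or by an extreme point of $K$, and whose two ``thin ends'' are zeros of generators from $S$; crucially in (2),(3) the two consecutive isolated points $p<q$ are \emph{both} zeros of the single generator $(x-p)(x-q)\in S$, and in (1) the right end of the bounded interval and the left end of the adjacent component are both zeros of the single generator attached to the separating gap. After an affine change of variables one normalises $C$ (say $[0,1]$, resp.\ $\{0\}\cup\{a\}$) and builds a hermitian $F=\left[\begin{smallmatrix}\alpha & \beta\\ \bar\beta & \gamma\end{smallmatrix}\right]\in\HH_2(\CC[x])$ from the relevant generators so that: (i) $\alpha,\gamma\ge 0$ and $\alpha\gamma-|\beta|^2\ge 0$ on $K$, hence $F\succeq 0$ on $K$; (ii) at the two thin ends of $C$ the value $F$ is rank one with \emph{orthogonal} kernels (a ``quarter turn'' of the kernel direction along $C$); (iii) $\det F\not\equiv 0$, so that $F$ is not a rank‑one hermitian square. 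The three cases run in parallel, only the shape of $F$ and of the ambient generators changing.

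\textbf{Non‑membership.} Suppose $F=\sum_{e\in\{0,1\}^s}\sigma_e\underline{g}^e$ with each $\sigma_e\in\sum M_2(\CC[x])^2$. Evaluating at a thin end $p$: since $\underline{g}^e(p)\ge 0$ and the PSD matrices $\sigma_e(p)\underline{g}^e(p)$ sum to the rank‑one matrix $F(p)$, for every $e$ with $\underline{g}^e(p)\ne 0$ the matrix $\sigma_e(p)$ kills the kernel vector of $F(p)$, and a first‑order analysis at $p$ (using $g_k'(p)\ne 0$ for the two generators $g_k$ vanishing at $p$, forced by $F\succeq 0$ on $C$ and in the flanking gaps) constrains the remaining $\sigma_e$. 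Running the same argument at the other thin end $q$, whose kernel direction is \emph{orthogonal} to that at $p$, ties these constraints together through the \emph{single shared generator} $(x-p)(x-q)$ (resp.\ the single gap generator in case (1)); combined with the degree restrictions that the signs of the $\underline{g}^e$ at $\pm\infty$ impose on the $\sigma_e$ — this is where the unboundedness of $K$ enters, absent in the compact case treated by Theorem~\ref{nasicenost-na-R} — one reaches a contradiction. This is precisely the point where $n=2$ diverges from $n=1$: the scalar certificate of \cite[Theorem 2.2]{K-M} carries no such directional obligation.

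\textbf{Main obstacle.} The crux is the final step: turning the ``quarter turn'' into a rigorous obstruction that rules out the $\sigma_e$ of \emph{every} degree, which forces one to exploit the sign of $\underline{g}^e$ at $\pm\infty$ and on the flanking gaps with care, and to make the bookkeeping uniform over the (possibly many) components of $K$ and over the three shapes. A secondary difficulty lies already in the construction: $F$ must sit strictly inside the PSD cone over $K$ (so that $\det F\not\equiv 0$) — otherwise $F$ splits as a rank‑one hermitian square plus a scalar multiple of a generator and lies in $T^2_S$ trivially — while still keeping the obstruction visible.
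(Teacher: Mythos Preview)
Your reduction to the natural description is correct and matches the paper's Lemma~\ref{redukcija-na-naravno-deskripcijo}. The geometric picture behind your construction --- a $2\times 2$ hermitian polynomial whose kernel direction rotates between two distinguished points of $K$ --- is also present in the paper: its explicit witness $F_k$ has $\det F_k=(x-x_1)(x-x_2)(x-x_3)$ and $\ker F_k(x_1)\oplus\ker F_k(x_2)=\CC^2$.

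The non-membership step, however, is where your proposal has a genuine gap, and you flag it yourself under ``Main obstacle''. The paper does \emph{not} run a first-order local analysis at the thin ends to constrain all $\sigma_e$. In case~(1) its mechanism is a degree comparison: since $\deg F_k=2$ while every product $\underline{g}^e$ with two or more factors already has degree $\geq 3$, any representation collapses to
\[
F_k=\sigma_0+\sigma_1(x-x_1)+\sum_{j=1}^{m+1}\sigma_{j+1}(x-x_{2j})(x-x_{2j+1})
\]
with $\sigma_1,\dots,\sigma_{m+2}$ constant matrices and $\deg\sigma_0\le 2$. The $x^2$-coefficient then forces $\sigma_2=\left[\begin{smallmatrix}0&0\\0&k_0\end{smallmatrix}\right]$, and one computes that the determinant of $F_k-\sigma_2(x-x_2)(x-x_3)$ changes sign on $[x_1,x_3]$, contradicting the positivity of the remaining summands there. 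This is a global determinant argument after a degree cut-off, not a Taylor analysis at the endpoints; your proposed route may be salvageable, but as written it does not explain how orthogonality of two kernel vectors rules out $\sigma_e$ of \emph{arbitrary} degree.

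A second, structural gap concerns cases~(2) and~(3). You propose to work directly on the unbounded $K$, but the paper does not: it applies a M\"obius map $x\mapsto 1/(d-x)$ sending $K$ to a \emph{compact} set of the form $[x_1,x_2]\cup\{x_3,\dots,x_m\}$, so that it only needs $F\notin T^2_{S_2,2}$, the degree-$2$ truncation. The orthogonal-kernel trick you describe is then exactly what is used (Claim~2 of Proposition~\ref{trditev-protiprimer-interval-in-poltrak}), but only \emph{after} the M\"obius reduction has made the degree bound free; on the original unbounded $K$ there is no generator controlling the direction to infinity, and your sketch gives no substitute for that bound.
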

	
	It is sufficient to prove Theorem \ref{protiprimeri-za-sibko-omejenostno-nasicenost-R} for the natural description $S$ of $K$ by the following lemma.

\begin{lemma}\label{redukcija-na-naravno-deskripcijo}
	Let $K\subseteq \RR$ be an unbounded closed semialgebraic set
	with the natural description $S$. Let $S_1\subset \RR[x]$ be a finite set such that
	$K_{S_1}=K$. 
	For every $n\in \NN$ such that 
	the $n$-th matrix preordering $T^n_S$ is not saturated, also the $n$-th 
	matrix preordering $T^n_{S_1}$ is not saturated.
\end{lemma}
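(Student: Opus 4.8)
The plan is to prove the contrapositive of the lemma: if the $n$-th matrix preordering $T^n_{S_1}$ is saturated, then $T^n_S$ is saturated as well. Since the natural description satisfies $K_S=K=K_{S_1}$, the inclusion $T^n_S\subseteq \Pos^n_{\succeq 0}(K)$ is automatic, and saturation of $T^n_{S_1}$ says exactly $T^n_{S_1}=\Pos^n_{\succeq 0}(K)$. So the whole statement reduces to the single inclusion $T^n_{S_1}\subseteq T^n_S$: once this is known, $\Pos^n_{\succeq 0}(K)=T^n_{S_1}\subseteq T^n_S\subseteq \Pos^n_{\succeq 0}(K)$ forces equality, i.e.\ $T^n_S$ is saturated.

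To prove $T^n_{S_1}\subseteq T^n_S$ I would first record a scalar-to-matrix transfer statement: for every $p\in\RR[x]$ with $p\geq 0$ on $K$ one has $p\, I_n\in T^n_S$. This follows from the $n=1$ case \cite[Theorem 2.2]{K-M}, which says the scalar preordering $T^1_S$ of the natural description $S$ is saturated; writing $p=\sum_{d\in\{0,1\}^{s}}\tau_d\,\underline g^d$ with $\tau_d\in\sum\RR[x]^2$ and multiplying by $I_n$ gives the claim, since each $\tau_d I_n$ is a sum of hermitian squares of scalar (diagonal) matrix polynomials.

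Next I would apply this to the generators of $T^n_{S_1}$. An arbitrary element of $T^n_{S_1}$ has the form $F=\sum_{e\in\{0,1\}^{s_1}}\sigma_e\,\underline{g_1}^e$ with $\sigma_e\in\sum M_n(\CC[x])^2$, where $S_1=\{g_{1,1},\dots,g_{1,s_1}\}$ and $\underline{g_1}^e=g_{1,1}^{e_1}\cdots g_{1,s_1}^{e_{s_1}}$. Each $\underline{g_1}^e$ is a product of elements of $S_1$, hence $\geq 0$ on $K=K_{S_1}$, so by the transfer statement $\underline{g_1}^e\, I_n=\sum_{d}\tau_{e,d}\,\underline g^d$ with $\tau_{e,d}\in\sum\RR[x]^2$ (here I write $\tau_{e,d}$ for $\tau_{e,d}I_n$). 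Substituting and using that scalar polynomials are central in $M_n(\CC[x])$, one gets $F=\sum_{d}\big(\sum_e\tau_{e,d}\sigma_e\big)\underline g^d$. It remains to check that each $\tau_{e,d}\sigma_e$ is again a sum of hermitian squares: if $\tau_{e,d}=\sum_l h_l^2$ with $h_l\in\RR[x]$ and $\sigma_e=\sum_k G_k^\ast G_k$, then $\tau_{e,d}\sigma_e=\sum_{k,l}(h_l G_k)^\ast(h_l G_k)$. Hence $F\in T^n_S$, which gives $T^n_{S_1}\subseteq T^n_S$ and completes the argument.

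The argument is elementary, so I do not expect a serious obstacle; the only nontrivial ingredient is the $n=1$ saturation of the natural description from \cite{K-M}, and the one place to be careful is the last observation, namely that multiplying a sum of hermitian squares by a sum of squares of real polynomials remains a sum of hermitian squares (which works precisely because those real polynomials are central and self-adjoint). I would also note in passing that the hypothesis "$K$ unbounded" is never actually used: the same proof shows $T^n_{S_1}\subseteq T^n_S$ for the natural description $S$ of any closed semialgebraic $K\subseteq\RR$; unboundedness only matters for the statements to which this lemma is applied.
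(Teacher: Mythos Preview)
Your proof is correct and takes essentially the same approach as the paper: both reduce to showing $T^n_{S_1}\subseteq T^n_S$ and both invoke \cite[Theorem 2.2]{K-M} (saturation of $T^1_S$ for the natural description) as the only nontrivial ingredient. The sole cosmetic difference is that the paper expresses each individual generator $f_j\in S_1$ as an element of $T^1_S$ and then substitutes into the product $\underline{f}^{e'}$, whereas you apply saturation of $T^1_S$ directly to the full product $\underline{g_1}^e$; your version is marginally tidier but the idea is identical. Your side remark that the unboundedness hypothesis plays no role in the lemma itself is also correct.
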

	
	\begin{proof}
		Let us write $S:=\left\{g_1,\ldots,g_s\right\}$ and 
		$S_1:=\left\{f_1,\ldots,f_t\right\}$.
		We have to show that every matrix polynomial $F$ from $T^n_{S_1}$ also belongs to $T^n_S$. 
		A matrix polynomial $F$ from $T^n_{S_1}$ is of the form 
			\begin{equation} \label{izraz-1}
				F=\sum_{e'\in \{0,1\}^t} \tau_{e'} 	f_1^{e'_1}\ldots f_t^{e'_t},
			\end{equation}
		where $e':=(e'_1,\ldots,e'_t)$ and $\tau_{e'}\in \sum M_n\left(\CC[x]\right)^2$.
		By \cite[Theorem 2.2]{K-M}, the preordering	$T^1_{S}$ is saturated and thus for each
		$j$ there exist $\sigma_{e,j}\in \sum \RR[x]^2$ such that
			\begin{equation}\label{izraz-2}
				f_j=\sum_{e\in \{0,1\}^s} \sigma_{e,j} \,g_1^{e_1}\cdots g_s^{e_s},
			\end{equation}
		where $e:=(e_1,\ldots,e_s)$.
		Plugging (\ref{izraz-2}) into (\ref{izraz-1}) and rearranging terms we obtain $F\in T^n_S$. 
		This concludes the proof.
	\end{proof}
	
	
	
	In the remaining part of this section we will prove Theorem \ref{protiprimeri-za-sibko-omejenostno-nasicenost-R}.	The major step will be Proposition \ref{trditev-protiprimer-interval-in-poltrak}.
	
	Let $K$ be a closed semialgebraic set with a natural description $S=\{g_1,\ldots,g_s\}$. For $
	n\in \NN$ and $d\in \NN\cup\{0\}$ we define the set 
	\begin{equation*} 
		T^n_{S,d}:=\left\{\sum_{e\in \{0,1\}^s} \sigma_e \underline{g}^e\colon 
		\sigma_e \in \sum M_n(\CC[x])^2\; \text{and} \;
		\deg(\sigma_e \underline{g}^e)\leq d \; \forall e \in \{0,1\}^s\right\}.
	\end{equation*}
	
	\begin{proposition}	\label{trditev-protiprimer-interval-in-poltrak}
		Let $K=[x_1,x_2]\cup [x_3,\infty)$ be a union of a bounded and an unbounded interval
		where $x_1<x_2<x_3$.
		Let us define the polynomial 
			$$F_k(x):=\left[\begin{array}{cc} x+A(k) & D(k) \\ D(k)& x^2+B(k)x+C(k)\end{array}\right],$$ 
		where
		\begin{eqnarray*}
			A(k) &:=& k-x_1,\\
			B(k) &:=& -k-x_2-x_3,\\
			C(k) &:=& k^2+k(-x_1+x_2+x_3)+x_2x_3,	\\
			D(k) &:=& \sqrt{A(k)C(k)+x_1x_2x_3}=\\
					 &=& \sqrt{k^3+k^2(-2x_1+x_2+x_3)+k(x_2x_3+x_1^2-x_1x_2-x_1x_3)}. 
		\end{eqnarray*}
		We define $p_k(x):=x^2+B(k)x+C(k)$.
		For every $k\in \RR$ which satisfies			
			\begin{equation}\label{pogoj 3}
				k>0,
			\end{equation}
			\begin{equation}\label{pogoj 1}
				D(k)^2=k^3+k^2(-2x_1+x_2+x_3)+k(x_2x_3+x_1^2-x_1x_2-x_1x_3)>0,
			\end{equation}
			\begin{equation}\label{pogoj 2}
				p_k\left(-\frac{B(k)}{2}\right)=\frac{3}{4}k^2+k\left(-x_1+\frac{x_2+x_3}{2}\right)-
			\left(\frac{x_2-x_3}{2}\right)^2>0,
			\end{equation}
		the matrix polynomials $F_k(x)$	belongs to $\Pos_{\succeq 0}^2(K)$, but:
		\begin{description}
			\item[Claim 1.] 
				$F_k\notin T^2_{S_1}$ where $S_1$ is the natural description of any set $K_1$ of the
				form 
					$$[x_1,x_2]\cup \cup_{j=1}^{m} [x_{2j+1},x_{2j+2}] \cup [x_{2m+3},{\infty})\subseteq K$$ 
				with $m\in \NN\cup\{0\}$ and $x_j\leq x_{j+1}$ for each $j$ (and $x_1<x_2<x_3$). In particular, 
					$$F_k(x)\notin T^2_S,$$
				where $S$ is the natural description of $K$.
			\item[Claim 2.] 
				$F_k\notin T^2_{S_2,2}$ where $S_2$ is the natural description of any set $K_2$ of the
				form 
					$$[x_1,x_2]\cup \cup_{j=3}^{m}\{x_{j}\}\subset K$$ 
				with $m\in \NN$, $m\geq 4$ and $x_j< x_{j+1}$ for each $j$.
		\end{description} 
	\end{proposition}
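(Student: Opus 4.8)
The plan is to first check that $F_k$ is positive semidefinite on $K$, then to squeeze out of any hypothetical representation a determinantal inequality forcing every square piece to have rank at most one, and finally to collapse what remains to a small system of scalar identities that contradicts the choice of $A(k),B(k),C(k),D(k)$. For positivity I would just expand the $2\times 2$ determinant: using $D(k)^2=A(k)C(k)+x_1x_2x_3$ one finds
\[
\det F_k(x)=\bigl(x+A(k)\bigr)p_k(x)-D(k)^2=(x-x_1)(x-x_2)(x-x_3),
\]
which is $\ge 0$ exactly on $K$. Since $(F_k)_{11}(x)=x+k-x_1\ge k>0$ on $K$ by \eqref{pogoj 3} and $D(k)$ is real by \eqref{pogoj 1}, at each $x\in K$ the matrix $F_k(x)$ has positive $(1,1)$-entry and nonnegative determinant, hence is positive semidefinite, so $F_k\in\Pos_{\succeq 0}^2(K)$.

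Next, suppose $F_k=\sum_e\sigma_e\underline g^e$ is a representation in the preordering in question, the $\underline g^e$ being products of generators of the relevant natural description and $\sigma_e\in\sum M_2(\CC[x])^2$. On the relevant set $L$ ($L=K_1$ for Claim~1, $L=K_2$ for Claim~2) every $\underline g^e\ge 0$, so each $\sigma_e\underline g^e$ is positive semidefinite on $L$; applying $\det(M+N)\ge\det M+\det N$ for positive semidefinite $2\times 2$ matrices (the cross term $\operatorname{tr}(M^{\mathrm{adj}}N)$ being nonnegative) pointwise on $L$ gives
\[
(x-x_1)(x-x_2)(x-x_3)=\det F_k\ \ge\ \sum_e(\underline g^e)^2\det\sigma_e\ \ge\ 0,
\]
with each $\det\sigma_e$ nonnegative on $\RR$. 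Evaluating at the simple zeros $x_1,x_2,x_3\in L$ kills every summand there, so $\det\sigma_0$ vanishes at $x_1,x_2,x_3$ and $\det\sigma_e(x_i)=0$ whenever $\underline g^e(x_i)\ne 0$.

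Now I would reduce to rank one. In Claim~1 the unbounded ray inside $K_1$ bounds the growth of the nonnegative sum above by $x^3$, which forces $\det\sigma_e\equiv 0$ whenever $\deg\underline g^e\ge 3$, $\det\sigma_e$ to be constant when $\underline g^e=x-x_1$, and $\deg\det\sigma_0\le 2$; together with the three zeros, all $\det\sigma_e\equiv 0$. Comparing the degree-$1$ entry $x+k-x_1$ on the ray likewise kills $(\sigma_e)_{11}$ except for $e=0$ and the index $e_1$ with $\underline g^{e_1}=x-x_1$, yields $(\sigma_0)_{11}=k$, $(\sigma_{e_1})_{11}=1$, and—since $\det\sigma_e=0$—makes every remaining $\sigma_e$ equal $\operatorname{diag}(0,\gamma_e)$ with $\gamma_e$ a nonnegative constant. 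In Claim~2 the degree bound $d=2$ plays this role: only $\sigma_0$ (degree $\le 2$), the coefficients of $x-x_1$ and $x_m-x$ (degree $\le 1$), and constant matrices multiplying a single gap polynomial or $(x-x_1)(x_m-x)$ can occur; since a nonnegative polynomial has even-order real zeros, a $\det\sigma_e$ of degree $\le 2$ vanishing at two of the $x_i$ is $\equiv 0$, and $\det\sigma_0$—of degree $\le 4$, divisible by $(x-x_1)^2(x-x_2)^2$ and vanishing at $x_3$—is also $\equiv 0$. So in both claims every $\sigma_e$ has rank $\le 1$, and the low-degree ones are constant.

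Finally, writing each rank-$\le 1$ piece as $v_e^\ast v_e$ with $v_e$ a constant or linear row vector (for $\sigma_0$ via Theorem~\ref{zvezna}) and reading off the three entries of $F_k$ as polynomial identities, one is left with a small system in $k$, the $x_i$ and nonnegative weights. For Claim~1 the $(2,2)$-entry reads, in the case where the $(1,2)$-entry of $\sigma_{e_1}$ vanishes, $x^2+B(k)x+C(k)=D(k)^2/k+\sum_\ell c_\ell(x-x_{2\ell})(x-x_{2\ell+1})$ with $c_\ell\ge 0$ and $\sum_\ell c_\ell=1$; applying the affine functional $\phi(s,p):=p-x_1 s+x_1^2$—the value at $x_1$ of the monic quadratic with root-sum $s$ and root-product $p$—to the gap data $(x_{2\ell}+x_{2\ell+1},x_{2\ell}x_{2\ell+1})$ turns this into $\sum_\ell c_\ell\phi=C(k)-D(k)^2/k+B(k)x_1+x_1^2$, which is $0$ by $D(k)^2=(k-x_1)C(k)+x_1x_2x_3$; but $\phi(x_{2\ell}+x_{2\ell+1},x_{2\ell}x_{2\ell+1})=(x_{2\ell}-x_1)(x_{2\ell+1}-x_1)>0$ since $x_1<x_2<x_3\le x_4\le\cdots$, a contradiction. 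The remaining case (a nonzero constant $(1,2)$-entry $c$ of $\sigma_{e_1}$) is ruled out by the same functional when $\sum_\ell c_\ell>0$, and when $\sum_\ell c_\ell=0$ by the identity $4D(k)^2=k(2k-2x_1+x_2+x_3)^2-k(x_3-x_2)^2$, which makes the forced bound $(\operatorname{Re}c)^2\le|c|^2=k$ impossible. Claim~2 I would close by the analogous but longer comparison of the coefficients of all three entries, where \eqref{pogoj 3}, \eqref{pogoj 1} and \eqref{pogoj 2} come into play. I expect this last collapse step—pinning down exactly which $\sigma_e$ survive, reducing to the scalar identities, and carrying out the case analysis for Claim~1 and the full coefficient comparison for Claim~2—to be the main obstacle, together with checking that the degree and vanishing arguments of the previous step go through uniformly for every admissible $K_1$ and $K_2$.
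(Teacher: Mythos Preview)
Your positivity check is fine (and in fact a bit slicker than the paper's, which verifies all principal minors instead of using just the $(1,1)$-entry and the determinant). Your route to Claim~1 also goes through: the degree bounds on the ray, the forcing of $(\sigma_e)_{11}=0$ for $\deg\underline g^e\ge 2$, the rank-$1$ reduction, and the evaluation at $x_1$ (which really is all your functional $\phi$ is) together with the identity $4D(k)^2=k(2k-2x_1+x_2+x_3)^2-k(x_3-x_2)^2$ give a valid contradiction. Note, however, that your case split is slightly mis-stated: evaluation at $x_1$ \emph{always} forces every $c_\ell=0$, so there is no subcase ``$\sum_\ell c_\ell>0$'' to handle separately; the dichotomy is simply $c=0$ (where $\sum_\ell c_\ell=1$ is contradicted) versus $c\neq 0$ (where $|c|^2=k$ and the $D(k)^2$ identity finishes).

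That said, the paper's argument is considerably shorter and avoids the rank-$1$ analysis entirely. After the same entrywise degree reduction that you perform, the paper observes that the $x^2$-coefficient of $F_k$ forces $\sigma_2=\mathrm{diag}(0,k_0)$ with $k_0\in[0,1]$, and then simply computes
\[
\det\bigl(F_k(x)-\sigma_2(x-x_2)(x-x_3)\bigr)=(x-x_2)(x-x_3)\bigl[(1-k_0)x-(x_1-x_1k_0+kk_0)\bigr],
\]
which is negative on $(x_2,x_3)$ if $k_0=0$ and negative at $x_1$ if $k_0>0$; since the remaining summands are positive semidefinite on $K_1\cup[x_2,x_3]$, this is an immediate contradiction. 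The payoff of the paper's approach is that no information about the \emph{other} $\sigma_e$'s is needed at all.

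For Claim~2 there is a genuine gap: you stop at the rank-$1$ reduction and defer the ``full coefficient comparison'', which you correctly flag as the main obstacle. The paper bypasses this entirely with a kernel argument that you are missing. Because $\det F_k(x_i)=0$ and $F_k(x_i)\ne 0$ for $i=1,2$, the kernels $\ker F_k(x_1)$ and $\ker F_k(x_2)$ are one-dimensional; a short computation shows they are linearly independent, so $\ker F_k(x_1)\oplus\ker F_k(x_2)=\CC^2$. From $F_k-\tau_j(x-x_j)(x-x_{j+1})\succeq 0$ on $K_2$ one gets $\ker F_k(x_1)\cup\ker F_k(x_2)\subseteq\ker\tau_j$ for every $j\ge 3$, hence $\tau_j=0$. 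This collapses the representation to a handful of terms, and then the same ``subtract $\tau_2(x-x_2)(x-x_3)$ and look at the determinant on $[x_1,x_m]$'' trick finishes Claim~2 in two lines. Incorporating this kernel step would close your argument without the long coefficient comparison you anticipate.
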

	
	\begin{proof}
		First we will prove that $F_k(x)$ belongs to $\Pos_{\succeq 0}^2(K)$ for every $k\in \RR$ 
		satisfying the conditions $(\ref{pogoj 3})$-$(\ref{pogoj 2})$. Note that every sufficiently large 
		$k$ satisfies the conditions $(\ref{pogoj 3})$-$(\ref{pogoj 2})$. 
		Condition $(\ref{pogoj 1})$ ensures that $D(k)\in \RR$ and hence 
		$F\in \HH_n(\RR[x])$.
		The determinant of $F_k(x)$ is $(x-x_{1})(x-x_{2})(x-x_{3})\in \Pos^1_{\succeq 0}(K).$
		The upper left corner of $F$ is non-negative for $x\geq x_1-k$ and hence it belongs to 
		$\Pos^1_{\succeq 0}(K)$ by (\ref{pogoj 3}). The lower right corner is a quadratic polynomial
		$p_k(x)$ with a vertex in $x=\frac{-B(k)}{2}$. Since $k$ satisfies
		(\ref{pogoj 2}), 
			$p_k\left(\frac{-B(k)}{2}\right)>0.$
		So $p_k(x)$ is positive on $\RR$ and hence $p_k\in\Pos^1_{\succeq 0}(K)$. 
		Since all principal minors of $F_k(x)$ are non-negative on $K$, the conclusion
		$F_k(x)\in \Pos^2_{\succeq 0}(K)$ follows.
		
		We will separately prove both claims of the theorem.\\
		
	\noindent\textbf{Proof of Claim 1.}
		The set
			$$\{\underbrace{x-x_1}_{g_1(x)},\underbrace{(x-x_2)(x-x_3)}_{g_2(x)},\ldots, 
				\underbrace{(x-x_{2m+2})(x-x_{2m+3})}_{g_{m+2}(x)}\}$$
	is the natural description $S_1$ of $K_1$. We will prove that $F_k(x)\notin T^2_{S_1}$  by 
	contradiction. 
	Let us assume $F_k\in T_{S_1}^2$.
	Then for every
	$e:=(e_1,\ldots,e_{m+2})\in \{0,1\}^{m+2}$ there exists $\sigma_e\in \sum M_n(\CC[x])^2$, such that
		\begin{equation}\label{enakost 2}
			F_k=\sum_{e\in \{0,1\}^{m+2}} \sigma_e g_1^{e_1}\cdots g_{m+2}^{e_{m+2}}.
		\end{equation}
	By the degree comparison of both sides of (\ref{enakost 2}), there exist $\sigma_j\in
	\sum M_n(\CC[x])^2$, such that
		\begin{equation}\label{enakost 1}
			F_k(x)=\sigma_0 + \sigma_1 (x-x_{1}) + \sum_{j=1}^{m+1} \sigma_{j+1} (x-x_{2j})(x-x_{2j+1}),
		\end{equation}
		$$\deg(\sigma_0)\leq 2, \quad \deg(\sigma_j)=0 \text{ for } j=1,\ldots,m+2.$$
	By observing the monomial $x^2$ on both sides of (\ref{enakost 1}), it follows that
	$\sigma_2=\left[\begin{array}{cc} 0 & 0 \\ 0 & k_0 \end{array}\right]$ for some $k_0\in [0,1]$.
	Equivalently, (\ref{enakost 1}) can be written as 
		$$F_k(x)-\sigma_2 (x-x_2)(x-x_3) = \sigma_0+\sigma_1(x-x_1)
			+\sum_{j=2}^{m+1} \sigma_{j+1} (x-x_{2j})(x-x_{2j+1}).$$
	The right-hand side belongs to	
	$\Pos^2_{\succeq 0}(\hat K_1)$ where
	$\hat{K}_1=K_1\cup [x_2,x_3]$. We will prove that the left-hand side does not belong to
	$\Pos^2_{\succeq 0}(\hat K_1)$, which is a contradiction.
	The determinant of the left-hand side is
		$$q(x):=(x-x_{2})(x-x_{3})(x(1-k_0)-(x_{1}-x_{1}k_0+kk_0)).$$ 
	There are two cases two consider: $k_0=0$ and $k_0>0$. In the first case, $q(x)=(x-x_1)(x-x_2)(x-x_3)$
	which is negative on $(x_2,x_3)$, a contradiction with $q|_{\hat K_1}\geq 0$. In the second case,
	$q(x_1)=(x_1-x_2)(x_1-x_3)(-kk_0)<0,$ which is also a contradiction with $q|_{\hat K_1}\geq 0$.
	Thus 
		$$F_k(x)-\sigma_2 (x-x_{2})(x-x_{3})\notin \Pos_{\succeq 0}^2(\hat K_1),$$ 
	which is a contradiction.
	Therefore $F_k$ cannot be expressed in the form (\ref{enakost 2}) and so $F_k\notin T^2_{S_1}$.\\

	\noindent\textbf{Proof of Claim 2.} The set
			$$\{\underbrace{x-x_1}_{g_1(x)},\underbrace{(x-x_2)(x-x_3)}_{g_2(x)},\ldots, 
				\underbrace{(x-x_{m-1})(x-x_{m})}_{g_{m-1}(x)},\underbrace{x_m-x}_{g_{m}(x)}\}$$
	is the natural description $S_2$ of $K_2$. If $F_k\in T^2_{S_2,2}$, then
	there exist $\tau_j\in \sum M_n(\CC[x])^2$ such that
	\begin{equation}\label{enakost 5}
			F_k(x)=\tau_0 + \tau_1 (x-x_{1}) + \sum_{j=2}^{m-1} \tau_{j} (x-x_{j})(x-x_{j+1}) + 
			\tau_{m}(x_m-x)+ \tau_{m+1}(x-x_{1})(x_m-x),
	\end{equation}
$$\deg(\tau_0)\leq 2,\; \deg(\tau_j)=0 \text{ for } j=1,\ldots,m+1.$$
	From (\ref{enakost 5}) it follows that 
	\begin{equation} \label{enakost 8}
		(F_k(x)-\tau_{j}(x-x_j)(x-x_{j+1}))|_{K_2}\succeq 0 \quad \text{for } j=2,\ldots,m-1.
	\end{equation}
	From (\ref{enakost 8}) it follows that
		$$\ker F_k(x_1)\subseteq \ker\tau_{j},\; \ker F_k(x_2)\subseteq \ker\tau_{j} 
			\quad \text{for }	j=3,\ldots,m-1.$$
	Since $\ker F_k(x_1)\oplus \ker F_k(x_2)=\CC^2$, we conclude that $\tau_{j}=0$ for $j=3,\ldots,m-1$.
	
	Hence (\ref{enakost 5}) becomes
	\begin{equation*}
			F_k(x)=\tau_0 + \tau_1 (x-x_{1}) + \tau_{2} (x-x_{2})(x-x_{3}) +\tau_{m}(x_m-x)+
			\tau_{m+1}(x-x_{1})(x_m-x),
	\end{equation*}
	or equivalently,
	\begin{equation}\label{enakost 6}
			F_k(x)-\tau_{2} (x-x_{2})(x-x_{3}) =\tau_0 + \tau_1 (x-x_{1}) +\tau_{m}(x_m-x)+
			\tau_{m+1}(x-x_{1})(x_m-x).
	\end{equation}
	Since the determinant of the left hand side is of degree 4 and is divisible by 
	$(x-x_1)(x-x_2)(x-x_3)$ (divisibility by $x-x_1$ is due to $\ker F_k(x_1)\neq \{0\}$ and 
	(\ref{enakost 8}) for $j=2$), 
	it cannot be non-negative on $[x_1,x_m]$ (This follows by a simple geometric argument.).
	Hence the left-hand side of (\ref{enakost 6}) does not belong to $\Pos_{\succeq 0}^2([x_1,x_m])$,
	while the right-hand side does. This is a contradiction and thus $F_k\notin T^2_{S_2,2}$.
	\end{proof}
	
		%
		%
		%
	
	\begin{proof}[Proof of Theorem \ref{protiprimeri-za-sibko-omejenostno-nasicenost-R}.1]	
		By Lemma \ref{redukcija-na-naravno-deskripcijo}, we may assume that $S$ is the
		natural description of $K$.
		Let us write $K$ in the form $K_0\cup K_1$ where $K_0$ is the set of isolated points of $K$ and
		$K_1$ is the regular part of $K$ (i.e., does not have isolated points). 
		We separate three cases depending on the form of $K_1$.\\
		
		\noindent\textbf{Case 1: }\textsl{$K_1$ is bounded from below and unbounded from above.}
			Let us divide the isolated part $K_0$ into disjoint sets $K_{01}$, $K_{02}$
			where in $K_{01}$ are all those points which are smaller than the minimum of $K_1$
			and in $K_{02}$ all the others. The set $K_2:=K_1\cup K_{02}$ is of the form
				\begin{eqnarray*}
					[x_1,x_2]\cup \cup_{j=1}^{p} [x_{2j+1},x_{2j+2}] \cup [x_{2p+3},{\infty}),
				\end{eqnarray*}
			where $p\in \NN\cup \{0\}$, $x_1<x_2<x_3$ and $x_j\leq x_{j+1}$ for each $j\geq 3$. 
			Let us take a polynomial $F_1\in \Pos_{\succeq 0}^{2}(K_2)$ and define the polynomial 
				\begin{equation}\label{enakost 13}
					F(x):=\prod_{y\in K_{01}}(x-y)\cdot F_1(x) \in\Pos_{\succeq 0}^2(K).
				\end{equation}
			Let $S:=\{g_1,\ldots,g_s\}$ be the natural description of $K$.
		If $F$ belongs to $T^2_{S}$, then for every $e\in \{0,1\}^{s}$ there exists $\sigma_e\in \sum M_n(\CC[x])^2$ 
		such that
			\begin{equation} \label{enakost 12} 
				F=\sum_{e\in \{0,1\}^{s}} \sigma_e \underline g^e.
			\end{equation}
		Since for every $y\in K_{01}$ and every $e\in \{0,1\}^{s}$ we have $F(y)=0$ and $\sigma_e
		\underline g^e(y)\succeq 0$,	
		it follows from (\ref{enakost 12}) that	$\sigma_e \underline g^e(y)=0$.
		Therefore $\prod_{y\in K_{01}}(x-y)$ divides each 
		$\sigma_e \underline g^e$. \\
		
		\noindent \textbf{Claim.} 
		There exist $\tau_e \in \sum M_n(\CC[x])^2$ and 
		$h_e\in \Pos^1_{\succeq 0}(K_2)$ such that
			$$\frac{\sigma_e \underline g^e}{\prod_{y\in K_{01}}(x-y)}=\tau_e h_e.$$
		
		\noindent \textsl{Proof of Claim.} Let us take $y\in K_{01}$. We separate two possibilities.
		\begin{enumerate}
			\item \textsl{$x-y$ divides $\sigma_e$:} 
				Then	
					$\sigma_e \underline g^e=\hat\sigma_e \cdot (x-y)^2\underline g^e$
				where 
					$\hat\sigma_e\in \sum M_n(\CC[x])^2$ 
				and 
					$\frac{(x-y)^2\underline g^e}{x-y}= (x-y)\underline g^e
				\in\Pos^1_{\succeq 0}(K_2)$. 
			\item \textsl{$x-y$ does not divide $\sigma_e$:} 
				Then $x-y$ divides $\underline g^e$ and hence
					$\sigma_e \underline g^e=\sigma_e \cdot (x-y)\hat g_e$
				where 
					$\hat g_e:=\frac{\underline g^e}{x-y} \in\Pos^1_{\succeq 0}(K_2)$.
		\end{enumerate}
		Repeating the above procedure 
		for every $y\in K_{01}$ we obtain $\tau_e$ and $h_e$ proving Claim.\\
		
		Let $S_2$ be the natural description of $K_2$.
		By \cite[Theorem 2.2]{K-M}, 
		$h_e\in T_{S_2}^1$. It follows that $F_1=\sum_{e}\tau_e h_e\in T_{S_2}^2$. 
		
		We have proved that for $F_1\in \Pos_{\succeq 0}^{2}(K_2)$ and $F\in\Pos_{\succeq 0}^{2}(K)$ defined
		by (\ref{enakost 13}),
		from $F\in T_{S}^2$ it follows that $F_1\in T_{S_2}^2$. Therefore, to find 
		$F\in\Pos_{\succeq 0}^{2}(K)$ and	$F\notin T_{S}^2$, it is sufficient to find $F_1\in \Pos_{\succeq 
		0}^{2}(K_2)$ and $F_1\notin T_{S_2}^2$.
		Let us define the set	$K_3 := [x_{1},x_{2}] \cup [x_3,\infty).$
		By Claim 1 of Proposition \ref{trditev-protiprimer-interval-in-poltrak}, there exists a polynomial 
		$F_1\in \Pos_{\succeq 0}^2(K_3)\subseteq \Pos_{\succeq 0}^2(K_2)$
		such that $F_1\notin T^2_{S_2}$. This proves Case 1.\\

		\noindent\textbf{Case 2:} \textsl{$K_1$ is unbounded from below and bounded from above.}
			Make a substitution $x\mapsto -x$ and observe that the set $-K_1$ is of the form in Case 1 and
			that the natural description of $K$ maps into the natural description of $-K$.\\
			%
			
		\noindent\textbf{Case 3:} \textsl{$K_1$ is unbounded from below and above.}
			Let $d\in \RR$ be the smallest endpoint of $K_1$.
			Define the map 
				$\lambda_d: \RR\setminus \{d\} \to \RR$ with $\lambda_d(x):=\frac{1}{d-x}.$
			Observe that $\lambda_d(K_1)=:K_2$ is the set of the form 
			$[x_1,x_{2}] \cup [x_3,x_4]\cup \ldots\cup [\hat x_{2m+1},\infty)$
			where $m\in \NN$ and $x_{j}<x_{j+1}$ for every $j$.
			Let $S_3$ be the natural description of $\lambda_d(K)$. 
			As in Case 1, construct the polynomial $F\in \Pos^{2}_{\succeq 0}(\lambda_d(K))$
			such that $F\notin T^2_{S_3}$.
			Now 
					$G(x)=x^{\left(2\left\lceil \frac{\deg(F)}{2}\right\rceil\right)}\cdot F\left(d-\frac{1}{x}\right)\in \Pos^2_{
						\succeq 0}(K)$ 
				and 
			$G\notin T^2_{S}$.
	\end{proof}
		
	\begin{proof} [Proof of Theorem \ref{protiprimeri-za-sibko-omejenostno-nasicenost-R}.2 and 
	\ref{protiprimeri-za-sibko-omejenostno-nasicenost-R}.3]
		By Lemma \ref{redukcija-na-naravno-deskripcijo}, we may assume that $S$ is the 
		natural description of $K$. Let $d\in \RR$ be an arbitrary point such that $d\notin K$.
		Define the map 
				$\lambda_d: \RR\setminus \{d\} \to \RR$ with $\lambda_d(x):=\frac{1}{d-x}.$
			Observe that $\lambda_d(K)$ is the set of the form
			$[x_1,x_2] \cup \cup_{j=3}^m\{x_j\}$
		where $m\geq 4$ and the points $x_j$ are pairwise different. Further on, we may choose $d\in \RR$
		such that
			$x_1<x_2<x_3<\ldots<x_m$ or $x_m<x_{m-1}<\ldots<x_3<x_1<x_2.$
		By substitution $x\mapsto -x$, we may assume that $x_1<x_2<x_3<\ldots<x_m$.
	Let $S_1=\{g_1,\ldots,g_s\}$ be the natural description of $\lambda_d(K)$.
	Notice that to prove the statement of the theorem, it is sufficient to find 
		$F\in \Pos^2_{\succeq 0}(\lambda_d(K))$ of degree $2k$ such that $F\notin T^2_{S_1,2k}$.
	By Claim 2 of Proposition \ref{trditev-protiprimer-interval-in-poltrak}, 
	there is $F\in \Pos^2_{\succeq 0}(\lambda_d(K))$ of degree 2 such 
	that $F\notin T^{2}_{S_1,2}$.
	This concludes the proof.
	\end{proof}
Theorem \ref{glavni} gives a characterization of the set $\Pos_{\succeq 0}^n(K)$ for unbounded sets $K$.
	
	\begin{theorem} \label{glavni}
		Suppose $K$ is an unbounded closed semialgebraic set in $\RR$ and $S$
	the natural description of $K$. 
	Then, for
	any $F\in \HH_n(\CC[x])$, the following are equivalent:
		\begin{enumerate}
			\item $F\in \Pos_{\succeq 0}^n(K)$.
			\item There exists a polynomial $h\in \RR[x]$ such that for every isolated point $w\in K$,
				$h(w)\neq 0$ and $h^2 F\in T^n_S$.
			\item For every point $w\in \CC$ there exists a polynomial $h\in \RR[x]$ 
				such that $h(w)\neq 0$ and $h^2 F\in T^n_S$.
		\end{enumerate}
	\end{theorem}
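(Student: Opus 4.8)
My plan is to establish $(2)\Rightarrow(1)$, $(3)\Rightarrow(1)$, $(1)\Rightarrow(2)$ and $(1)\Rightarrow(3)$, which together give all three equivalences. The implications towards $(1)$ are immediate: every element of $T^n_S$ is positive semidefinite on $K=K_S$, so whenever $h\in\RR[x]$ is not identically zero and $h^2 F\in T^n_S$ we have $F(x)\succeq 0$ for every $x\in K$ with $h(x)\ne 0$; on each interval contained in $K$ the zeros of $h$ are finite in number, whence $F\succeq 0$ there by continuity, and $h(w)\ne 0$ at every isolated point $w$ of $K$, so $F\succeq 0$ on all of $K$ and $F\in\Pos_{\succeq 0}^n(K)$. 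For the two implications out of $(1)$ I would prove the following common generalization of the ``$h^2F$-proposition'' (Proposition \ref{pomozna}) to the non-compact case and to several prescribed points at once: \emph{if $F\in\HH_n(\CC[x])$ satisfies $F\succeq 0$ on $K$ and $W\subset\CC$ is finite, then there is a nonzero $h\in\RR[x]$ with $h(w)\ne 0$ for all $w\in W$ and $h^2 F\in T^n_S$.} Taking $W$ to be the finite set of isolated points of $K$ yields $(1)\Rightarrow(2)$, and taking $W=\{w\}$ yields $(1)\Rightarrow(3)$.

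I would prove this statement by induction on $n$, following the proof of Proposition \ref{pomozna} with two modifications. First, in the scalar case one invokes the saturation of $T^1_S$ from \cite[Theorem 2.2]{K-M} in place of Scheiderer's results, so for $n=1$ one takes $h=1$. Second, the single explicit rotation of Lemma \ref{permutacijska-lema} is replaced by a generic unitary rotation, so that one entry can be made nonzero on all of $W$ simultaneously. In detail, for $F\not\equiv 0$ first extract the content, $F=p_0 G_0$, where $p_0\in\RR[x]$ is the monic greatest common divisor of the entries of $F$ (a real polynomial, since $F$ hermitian makes the ideal generated in $\CC[x]$ by those entries stable under conjugation of coefficients) and $G_0\in\HH_n(\CC[x])$ has content $1$, so $G_0(z)\ne 0$ for every $z\in\CC$. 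Then choose a generic unit vector $v\in\CC^n$ for which $a:=v^* G_0 v\in\RR[x]$ is not identically zero and $a(w)\ne 0$ for all $w\in W$; this is possible since $W$ is finite and each condition excludes only a proper real subvariety of $\CC^n$, using $G_0(w)\ne 0$. Conjugating by a unitary $U$ with first column $v$ puts $a$ in the $(1,1)$-entry of $U G_0 U^*$ and hence $p_0 a\in\RR[x]$ in the $(1,1)$-entry of $U F U^*$, so Lemma \ref{h-2-1-lema} applies to $U F U^*$. Its part (ii), after cancelling the suitable power of $p_0$, exhibits $\diag(d,D)$ with $d\in\RR[x]$ and $D\in\HH_{n-1}(\CC[x])$ as a congruence $M^* F M$ of $F$ by a matrix polynomial $M$, so $d\ge 0$ and $D\succeq 0$ on $K$; its part (i), again after cancelling the power of $p_0$, gives $a^4 F=N^*\diag(d,D)N$ for a matrix polynomial $N$. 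By the induction hypothesis applied to $D$ and $W$ there is $h_1\in\RR[x]$ with $h_1(w)\ne 0$ on $W$ and $h_1^2 D\in T^{n-1}_S$, while $h_1^2 d\in T^1_S$ by \cite[Theorem 2.2]{K-M}; hence $\diag(h_1^2 d,h_1^2 D)\in T^n_S$, and congruence by $N$ preserves $T^n_S$. Therefore $h:=a^2 h_1$ is nonzero, satisfies $h(w)\ne 0$ for all $w\in W$, and $h^2 F=N^*\diag(h_1^2 d,h_1^2 D)N\in T^n_S$.

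The routine work is to check that the cancellations of powers of $p_0$ leave everything in $\RR[x]$ and in the hermitian matrix polynomials, and that Lemma \ref{h-2-1-lema} is unaffected by the complex unitary conjugation, all exactly as in Proposition \ref{pomozna}. I expect the one genuinely new difficulty to be the uniform treatment of the finite set $W$: the rotations of Lemma \ref{permutacijska-lema} expose an entry nonzero at only one prescribed point, which suffices for $(1)\Rightarrow(3)$ (where one may keep the proof of Proposition \ref{pomozna} verbatim) but not for the single $h$ required in $(2)$; the replacement by a generic rotation direction is justified precisely because, once the content $p_0$ has been divided out, $G_0$ vanishes at no point of $\CC$.
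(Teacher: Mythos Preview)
Your argument is correct, and for $(1)\Rightarrow(3)$ it coincides with the paper's: both rerun the proof of Proposition~\ref{pomozna} verbatim, with \cite[Theorem~2.2]{K-M} replacing Scheiderer's result in the base case $n=1$. Where you diverge is in reaching $(2)$. The paper does not prove $(1)\Rightarrow(2)$ directly at all; instead it proves $(3)\Rightarrow(2)$ by the device of Remark~\ref{drugi-dokaz}: given the polynomials $h_w$ from $(3)$ for each isolated point $w$ of $K$, set $\tilde h:=\sum_w h_w^2$, so that $\tilde h(v)\ge h_v(v)^2>0$ at every isolated $v$ and $\tilde hF=\sum_w h_w^2F\in T^n_S$; then $\tilde h^2F=\tilde h\cdot(\tilde hF)\in T^n_S$ because $\tilde h$ is a sum of squares. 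This is considerably cheaper than your route, which strengthens Proposition~\ref{pomozna} to handle a finite set $W$ in one pass by extracting the full content $p_0$ of $F$ and replacing the explicit rotations of Lemma~\ref{permutacijska-lema} by a generic unitary direction. Your strengthening is correct---the key point, that $v\mapsto v^*G_0(w)v$ is not identically zero on $\CC^n$ whenever $G_0(w)\ne0$, holds for every $w\in\CC$, not only real $w$---and it buys you slightly more than the theorem asks: a single $h$ avoiding any prescribed finite subset of $\CC$, not merely the isolated points of $K$. But for the statement as written, the paper's summing trick delivers $(2)$ from $(3)$ without proving any new lemma.

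One minor slip: if the unitary $U$ has first \emph{column} $v$, then $v^*G_0v$ lands in the $(1,1)$-entry of $U^*G_0U$, not of $UG_0U^*$; either take $U$ with first row $v^*$ or conjugate by $U^*$ throughout.
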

	
	\begin{proof}
		For the implication $(3)\Rightarrow (2)$ construct $h$ in the same way as in Remark \ref{drugi-dokaz}
		(replace the boundary of $K$ with the set of its isolated points). The implication 
		$(2)\Rightarrow (1)$ is trivial. The proof of
		direction $(1)\Rightarrow (3)$ is the same as the proof of Proposition \ref{pomozna}, just
		that we use \cite[Theorem 2.2]{K-M} for the $n=1$ case instead of \cite[Theorem 5.17]{Scheiderer4}.
	\end{proof}

\section{Generalizations of the results to curves}

In this section Theorem \ref{nasicenost-na-R} is generalized to curves in $\RR^n$.
A characterization of sets $S$ satisfying Theorem \ref{verzija-za-krivulje}.1 was proved by Scheiderer in \cite[Theorem 5.17]{Scheiderer3} and \cite[Corollary 4.4]{Scheiderer4}. 
Using the same method as in the proof of Theorem \ref{nasicenost-na-R} we obtain the
implication $1.\Rightarrow 2.$ of the following theorem.


\begin{theorem} \label{verzija-za-krivulje}
		Suppose $I$ is a prime ideal of $\RR[\underline x]$ with $\dim(\frac{\RR[\underline x]}{I})=1$ and let
	$\mathcal Z(I):=\left\{\underline x\in \RR^d\colon f(\underline x)=0 \;\text{ for every }\; f\in I\right\}$ be its vanishing set. Let
	$S:=\{g_1,\ldots,g_s\}$ be a finite subset of $\RR[\underline x]$ and 
	$K_S=\{\underline x\in \RR^d\colon g_1(\underline x)\geq 0,\ldots,g_s(\underline x)\geq 0\}$
	the associated semialgebraic set.
	Suppose	the set
	$K_S\cap \mathcal Z(I)$ is compact.
	Then the following are equivalent:
	\begin{enumerate}
		\item The quadratic module $M^1_S+I$ is saturated.
			\item The $n$-th quadratic module $M^n_S+M_n(I)$ is saturated for every $n\in \NN$.
	\end{enumerate}
\end{theorem}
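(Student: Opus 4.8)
The converse implication $(2)\Rightarrow(1)$ is immediate --- just specialize to $n=1$ --- so the real content is $(1)\Rightarrow(2)$, and the plan is to transplant the three ingredients of the proof of Theorem \ref{nasicenost-na-R} to the coordinate ring $A:=\RR[\underline x]/I$, which is a Noetherian integral domain of Krull dimension $1$. Reducing modulo $M_n(I)$ identifies $M^n_S+M_n(I)$ with a matrix quadratic module $\overline{M}^n_S$ in the hermitian (here: symmetric) matrices over $A$, and saturation of $M^n_S+M_n(I)$ becomes the statement that $\overline{M}^n_S$ contains every such matrix that is positive semidefinite on the compact set $K:=K_S\cap\mathcal Z(I)$; hypothesis $(1)$ is exactly the $n=1$ instance of this. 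I would then fix the ring homomorphism $\Phi\colon A\to C(K,\RR)$, $\Phi(f)=f|_K$, and note that $\Phi(A)$ separates the points of $K$ because the coordinate functions do.

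The core step is the curve analogue of the ``$h^2F$-proposition'' (Proposition \ref{pomozna}): for every maximal ideal $\mathfrak m$ of $A$ and every symmetric $F$ over $A$ with $F\succeq 0$ on $K$, there is $h\in A\setminus\mathfrak m$ with $h^2F\in\overline{M}^n_S$. I would prove this by induction on $n$, essentially copying the proof of Proposition \ref{pomozna}. The base case $n=1$ is hypothesis $(1)$ with $h=1$. For the inductive step, Lemmas \ref{permutacijska-lema} and \ref{h-2-1-lema} are purely algebraic and hold verbatim over $A$; in fact, since $A$ is a commutative (real) ring, only the orthogonal conjugations $U_{kl}$ are needed, as the analogue of ``Case~2'' with pivot $p_{k_0l_0}$ always applies. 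The one genuinely new ingredient is the replacement of the factorization $F=c^mG$ with $G(x_0)\ne 0$: one must strip the common vanishing of $F$ at $\mathfrak m$ by working in $A_{\mathfrak m}$. When the curve is regular at $\mathfrak m$ --- the setting of Scheiderer's theorems, e.g.\ non-singular curves --- $A_{\mathfrak m}$ is a discrete valuation ring, so $F=\pi^{m}G$ for a uniformizer $\pi\in\mathfrak m$, with $G$ symmetric over $A_{\mathfrak m}$ and some entry of $G$ a unit, i.e.\ not vanishing at $\mathfrak m$; multiplying $F$ by a square $u^2$ with $u\in A\setminus\mathfrak m$ clears denominators and returns the picture to $M_n(A)$ without disturbing semidefiniteness. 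After the diagonalisation the scalar summand ``$d$'' and the $(n-1)\times(n-1)$ summand ``$D$'' appear as symmetric congruences of $u^2F$, hence are $\succeq 0$ on $K$; applying the inductive hypothesis to $D$ and hypothesis $(1)$ to $d$ produces $h$, a product of $u$, the pivot, and the polynomial obtained for $D$ --- all outside $\mathfrak m$.

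With the $h^2F$-proposition available, the final step is the transcription of the proof of Theorem \ref{nasicenost-na-R}. Given symmetric $F$ over $A$ with $F\succeq0$ on $K$, form the ideal $J:=\langle h^2\in A : h^2F\in\overline{M}^n_S\rangle$. The $h^2F$-proposition shows $J\not\subseteq\mathfrak m$ for every maximal ideal $\mathfrak m$, so $J=A$; by Noetherianity choose finitely many generators $h_1^2,\dots,h_k^2$, each satisfying $h_j^2F\in\overline{M}^n_S$. Proposition \ref{basic-lemma-2} applied to $R=A$ and $\Phi$ (with $f_j=h_j^2$; its hypotheses were verified above) yields $s_1,\dots,s_k\in A$ with $\Phi(s_j)>0$ and $\sum_j s_jh_j^2=1$. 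Since $\Phi(s_j)>0$, hypothesis $(1)$ gives $s_j\in\overline{M}^1_S$, and because $\overline{M}^1_S$ is saturated one has $\overline{M}^1_S\cdot\overline{M}^n_S\subseteq\overline{M}^n_S$ --- the only potentially problematic cross-products $g_ig_l$ are nonnegative on $K$, hence lie in $\overline{M}^1_S$ and can be reabsorbed. Therefore $F=\sum_j s_jh_j^2F\in\overline{M}^n_S$, which is assertion $(2)$.

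The step I expect to be the real obstacle is the stripping at a maximal ideal. It is transparent precisely when $A_{\mathfrak m}$ is a discrete valuation ring; at a singular point of the curve one instead has to pass to the normalization $\widetilde A$ of $A$ --- a Dedekind domain, module-finite over $A$, in which ideals factor --- strip a uniformizer there, and then multiply back by a conductor element, all while checking that the result stays in $M_n(A)$ and that the congruence-based positivity used above survives the detour. In practice one should record at the outset that hypothesis $(1)$ (via \cite[Corollary 4.4]{Scheiderer4}, \cite[Theorem 5.17]{Scheiderer3}) is being invoked only where this local machinery is available, after which the remainder is a routine adaptation of the arguments of the compact case.
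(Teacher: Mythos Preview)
Your proposal is correct and follows precisely the approach the paper indicates: the paper's own proof of $(1)\Rightarrow(2)$ is the single sentence ``Using the same method as in the proof of Theorem~\ref{nasicenost-na-R} we obtain the implication $1.\Rightarrow 2.$'', and you have faithfully transcribed that method to the coordinate ring $A=\RR[\underline x]/I$. You are in fact more careful than the paper on two points it leaves implicit --- the local stripping of a common factor at a maximal ideal (where you rightly note that regularity, i.e.\ $A_{\mathfrak m}$ a DVR, is what makes the argument go through cleanly, and sketch the normalization workaround otherwise) and the closure $\overline{M}^1_S\cdot\overline{M}^n_S\subseteq\overline{M}^n_S$ via saturation of $\overline{M}^1_S$ --- so nothing needs to be changed.
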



An example of a non-singular curve is the unit circle. 
Theorem \ref{zvezna} has an equivalent version for the unit complex circle $\TT$ (see \cite{Rosenblum} or \cite{Popov}). By passing from complex numbers to pairs of real numbers and by Theorem \ref{verzija-za-krivulje}, we obtain a generalization of this equivalent version to an arbitrary semialgebraic set in the unit circle.
To explain this generalization we need some notation. Let us equip the set of $n\times n$ matrix Laurent polynomials $M_n(\CC\left[z,\frac{1}{z}\right])$ with an involution $A(z)^\ast:=\overline{A(\frac{1}{\overline z})}^T$. We denote by $\HH_n(\CC[z,\frac{1}{z}])$ 
the set of all $B\in M_n(\CC\left[z,\frac{1}{z}\right])$ such that $B^\ast=B$,  
and by $\sum M_n(\CC\left[z\right])^2$ 
the set of all finite sums of elements of the form $B^\ast B$ where $B\in M_n(\CC\left[z\right])$.
Let 
	$\cS=\left\{b_1,\ldots, b_s\right\}$ be a finite set
from $\HH_1(\CC\left[z,\frac{1}{z}\right])$
and
	$\cK_{\cS}=\left\{z\in \TT\colon b_j(z)\geq 0,\; j=1,\ldots,s\right\}$
the associated semialgebraic set.
Let the \textsl{$n$-th matrix quadratic module generated by $\cS$} in $\HH_n(\CC[z,\frac{1}{z}]$ be
	$$\cM_\cS^n := \{\tau_0+ \tau_1b_1+\ldots+\tau_s b_s\colon \tau_j\in 
			\sum M_n\left(\CC\left[z\right]\right)^2\;\text{for}\;j=0,\ldots,s	
			\}.$$
We write $\Pos_{\succeq 0}^n(\cK_\cS)$ for the set of elements from $\HH_n(\CC[z,\frac{1}{z}])$ 
which are positive semidefinite on $\cK_\cS$.



\begin{corollary}\label{posledica1}
	$\cM_\cS^n=\Pos_{\succeq 0}^n(\cK_\cS)$ iff $\cS$ satisfies the following conditions:
		\begin{enumerate}
			\item[(a)] For every boundary point $a\in \cK_\cS$ which is not isolated
				there exists $k\in \{1,\ldots,s\}$ such that $b_k(a)=0$
				and $\frac{db_k}{dz}(a)\neq 0$.
			\item[(b)] For every isolated point $a\in \cK_\cS$
				there exist $k,l\in \{1,\ldots,s\}$ such that $b_k(a)=b_l(a)=0$,
				$\frac{db_k}{dz}(a)\neq 0, \frac{db_l}{dz}(a)\neq 0$
				and $b_kb_l\leq 0$ on some neighborhood of $a$.
		\end{enumerate}
\end{corollary}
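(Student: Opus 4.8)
The plan is to translate the statement to the real affine curve $C=\mathcal Z(x^2+y^2-1)\subseteq\RR^2$ and then invoke Theorem~\ref{verzija-za-krivulje} together with Scheiderer's characterisation of saturation on nonsingular curves. Set $I=(x^2+y^2-1)\subseteq\RR[x,y]$; it is prime, $\dim\RR[x,y]/I=1$, and $\mathcal Z(I)$ is the unit circle, which is compact and nonsingular, so Theorem~\ref{verzija-za-krivulje} applies to any finite $S\subseteq\RR[x,y]$. Writing $z=x+iy$, on $\TT$ one has $\frac1z=\overline z=x-iy$, so restriction to $\TT$ is a ring isomorphism $\CC[z,\frac1z]\xrightarrow{\ \sim\ }\CC[x,y]/I=:\CC[C]$, and it intertwines $A\mapsto A^\ast$ with the standard conjugate-transpose involution on $M_n(\CC[C])$ (conjugation on the coefficient field, identity on $\RR[C]=\RR[x,y]/I$); in particular $\HH_1(\CC[z,\frac1z])$ is carried onto $\RR[C]$, so $\cS=\{b_1,\dots,b_s\}$ corresponds to a finite $S\subseteq\RR[x,y]$ with $\cK_\cS\leftrightarrow K_S\cap\mathcal Z(I)$ and $\Pos^n_{\succeq0}(\cK_\cS)$ onto the hermitian $A\in M_n(\CC[C])$ that are positive semidefinite on $K_S\cap\mathcal Z(I)$. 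The only non-formal point in this dictionary is that $\sum M_n(\CC[z])^2$ restricted to $\TT$ equals $\sum M_n(\CC[C])^2$: "$\subseteq$" is clear, and for "$\supseteq$" one clears denominators, writing $B\in M_n(\CC[z,\frac1z])$ as $z^{-N}\widetilde B$ with $\widetilde B\in M_n(\CC[z])$, and uses $(z^{-N})^\ast=z^{N}$ on $\TT$ and the centrality of $z^{-N}$ to get $B^\ast B=\widetilde B^\ast\widetilde B$ on $\TT$. Hence after restriction $\cM^n_\cS$ is exactly the $n$-th matrix quadratic module $M^n_S$ over $\CC[C]$ (equivalently $M^n_S+M_n(I)$ over $\CC[x,y]$), and the corollary is precisely the assertion that this module is saturated iff $S$ — hence $\cS$ — satisfies conditions (a),(b) (for each $n$, equivalently for all $n$ at once).

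For "only if" I would argue by corners: if $\cM^n_\cS$ is saturated and $p\in\RR[C]$ is $\geq0$ on $K_S\cap\mathcal Z(I)$, then $\diag(p,0,\dots,0)$ is positive semidefinite there, so it lies in $\cM^n_\cS$, and its $(1,1)$-entry — a sum of expressions $\overline q\,q$ with $q\in\CC[C]$, hence of $u^2+v^2$ with $u,v\in\RR[C]$ — exhibits $p$ in $M^1_S+I$ over $\RR[x,y]$. Thus $M^1_S+I$ is saturated, and by Scheiderer \cite[Theorem 5.17]{Scheiderer3}, \cite[Corollary 4.4]{Scheiderer4} applied to the nonsingular curve $C$ this forces conditions (a),(b); here I would only need to check that Scheiderer's local requirement at the boundary and at the isolated points of $K_S\cap\mathcal Z(I)$ translates, via $z=x+iy$, into the stated conditions on $\frac{db_k}{dz}(a)$ and on the sign of $b_kb_l$ near $a$ — this is elementary once one notes that a hermitian Laurent polynomial is real-valued on $\TT$, so $b_k(a)=0$ with $\frac{db_k}{dz}(a)\neq0$ means exactly that $a$ is a simple zero of $b_k$ along $\TT$.

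For "if", conditions (a),(b) give via the same Scheiderer result that $M^1_S+I$ is saturated, and then Theorem~\ref{verzija-za-krivulje}, implication $1.\Rightarrow2.$, yields that $M^{2n}_S+M_{2n}(I)$ is saturated over $\RR$. It remains to transfer this back to the complex setting. Given hermitian $A=P+iQ$ over $\CC[C]$ that is positive semidefinite on $K_S\cap\mathcal Z(I)$, realify it to the symmetric, positive semidefinite $\widetilde A=\left(\begin{smallmatrix}P&-Q\\Q&P\end{smallmatrix}\right)\in M_{2n}(\RR[C])$, apply saturation of $M^{2n}_S+M_{2n}(I)$ to obtain a representation of $\widetilde A$, and then average every summand against $X\mapsto\tfrac12\left(X+J^\ast XJ\right)$ with $J=\left(\begin{smallmatrix}0&-I_n\\I_n&0\end{smallmatrix}\right)$; since $J$ is orthogonal this sends sums of squares to sums of squares and fixes $\widetilde A$, so we get a representation of $\widetilde A$ in which each term commutes with $J$. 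A $J$-commuting real sum of squares (resp.\ $\sigma_jb_j$) is the realification of a hermitian square (resp.\ of $\tau_jb_j$) over $\CC[C]$, and the $M_{2n}(I)$-part vanishes on $\TT$; undoing the realification gives $A\in M^n_S$ over $\CC[C]$, i.e.\ $A\in\cM^n_\cS$.

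The step I expect to be the main obstacle is this real/complex bookkeeping: one must verify carefully that a $J$-commuting real sum of squares is genuinely the realification of a complex hermitian square (which follows by stacking $B$ and $BJ$ in the correct order to get a $J$-equivariant matrix), and that averaging against $X\mapsto\tfrac12(X+J^\ast XJ)$ respects the whole module structure. A secondary, purely bookkeeping issue is the precise matching of Scheiderer's affine-curve conditions with (a),(b); both of these are routine but easy to get slightly wrong, which is why I would spell them out rather than leave them to the reader.
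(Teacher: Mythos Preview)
Your proposal is correct and follows the same route the paper sketches (the paper gives no detailed proof of this corollary): identify $\TT$ with the real affine curve $x^2+y^2=1$ via $z=x+iy$ and invoke Theorem~\ref{verzija-za-krivulje} together with Scheiderer's characterisation of saturation on nonsingular compact curves. Note only that in this paper's conventions the modules $M^n_S$ already sit inside $\HH_n(\CC[\underline x])$ and Theorem~\ref{verzija-za-krivulje} is obtained ``by the same method'' as Theorem~\ref{nasicenost-na-R}, hence over $\CC$; so your realification-to-$M_{2n}(\RR)$ and $J$-averaging detour, while correct (the point you flag can be settled by writing each factor $B=B_++B_-$ into its $J$-commuting and $J$-anticommuting parts and observing $B_-^TB_-=(KB_-)^T(KB_-)$ with $K=\operatorname{diag}(I_n,-I_n)$), is not needed.
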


As an application of Corollary \ref{posledica1} we obtain the following improvement of 
Theorem \ref{glavni}:

\begin{corollary} \label{glavni-2}
		Suppose $K$ is an unbounded closed semialgebraic set in $\RR$ and $S$
	the natural description of $K$. 
	Then, for
	$F\in \HH_n(\CC[x])$, the following are equivalent:
		\begin{enumerate}
			\item $F\in \Pos_{\succeq 0}^n(K)$.
			\item For every $w\in \CC\setminus \RR$ there exists $k_w\in \NN\cup\{0\}$ such 
				that $$((x-\overline{w})(x-w))^{k_w} F\in M^n_S.$$
		\end{enumerate}
\end{corollary}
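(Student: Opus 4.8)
The implication $(2)\Rightarrow(1)$ is immediate: $M^n_S\subseteq\Pos_{\succeq 0}^n(K)$ by definition, and for $w\in\CC\setminus\RR$ the scalar polynomial $(x-\overline w)(x-w)=(x-\re w)^2+(\im w)^2$ is strictly positive on $\RR\supseteq K$, so dividing an element of $\Pos_{\succeq 0}^n(K)$ by one of its powers keeps it in $\Pos_{\succeq 0}^n(K)$. The substance is $(1)\Rightarrow(2)$, and the plan is to transport $F$ to a hermitian matrix Laurent polynomial on the unit circle $\TT$, apply Corollary \ref{posledica1} there, and transport the resulting representation back to $\RR[x]$; the requested power of $(x-\overline w)(x-w)$ is exactly what one picks up when clearing the denominators introduced by the transport.

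Fix $w\in\CC\setminus\RR$ and let $\varphi(x)=\frac{x-w}{x-\overline w}$, a M\"obius map restricting to a homeomorphism $\RR\cup\{\infty\}\to\TT$ with $\varphi(\infty)=1$, $\varphi^{-1}(z)=\frac{\overline w z-w}{z-1}$, and $\varphi^{-1}(z)-a=\frac{(\overline w-a)(z-\varphi(a))}{z-1}$ for $a\in\RR$. Write $\delta(x):=(x-w)(x-\overline w)\in\RR[x]$. One records $2-z-\frac{1}{z}=\frac{-(z-1)^2}{z}\in\HH_1(\CC[z,\frac{1}{z}])$, which is $\geq 0$ on $\TT$ and vanishes only at $z=1$, together with $\big(2-z-\frac{1}{z}\big)\big|_{z=\varphi(x)}=|\varphi(x)-1|^2=\frac{4(\im w)^2}{\delta(x)}$. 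Assuming $F\neq 0$, write $\deg F=2N$ (even, since $F\succeq 0$ on the unbounded set $K$) and set
$$B(z):=\Big(2-z-\frac{1}{z}\Big)^N F\big(\varphi^{-1}(z)\big)=\frac{(-1)^N}{z^N}\,(z-1)^{2N}F\big(\varphi^{-1}(z)\big);$$
the factor $(z-1)^{2N}$ clears the denominator of $F\circ\varphi^{-1}$, so $B$ is a matrix Laurent polynomial, and on $\TT$ it equals the nonnegative real scalar $(2-z-\frac1z)^N$ times the Hermitian matrix $F(x)$, $x\in\RR$, so $B\in\HH_n(\CC[z,\frac{1}{z}])$. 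Moreover $B\succeq 0$ on $\overline{\varphi(K)}=\varphi(K)\cup\{1\}$: on $\varphi(K)$ because $B(\varphi(x))=(4(\im w)^2)^N\delta(x)^{-N}F(x)$ with $\delta>0$ and $F\succeq 0$ on $K$, and at $z=1$ because $B(1)=(4(\im w)^2)^N\,F_{2N}$ where $F_{2N}=\lim_{x\to\pm\infty,\,x\in K}x^{-2N}F(x)\succeq 0$ is a limit of positive semidefinite matrices (here we use that $K$ is unbounded).

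Next I would verify that the transported natural description satisfies the hypotheses of Corollary \ref{posledica1}. Writing $S=\{g_1,\dots,g_s\}$ for the natural description of $K$ (each $g_j$ of degree $1$ or $2$), set $b_j(z):=\big(2-z-\frac{1}{z}\big)\,g_j(\varphi^{-1}(z))\in\HH_1(\CC[z,\frac{1}{z}])$; these are Laurent polynomials of width $1$ with $b_j(\varphi(x))=\frac{4(\im w)^2}{\delta(x)}g_j(x)$, of the same sign as $g_j$ on $\RR$, so $\cK_\cS=\varphi(K)\cup\{1\}$ for $\cS=\{b_1,\dots,b_s\}$. At a point $\varphi(p)\neq 1$ the Laurent polynomial $b_j$ has a zero of the same order and the same local sign change as $g_j$ at $p$, so the defining properties of the natural description (simple zeros at endpoints of $K$; two generators with opposite local sign behaviour at an isolated point) translate into conditions (a)--(b) of Corollary \ref{posledica1} at such points. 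At the distinguished point $z=1=\varphi(\infty)$: since $K$ is unbounded, $1$ is never isolated in $\cK_\cS$; if $K$ is unbounded from both sides then $1$ is an interior point of $\cK_\cS$ and nothing is demanded; if $K$ has a least (resp.\ greatest) element $a$, then $x-a$ (resp.\ $a-x$) lies in $S$ and the corresponding $b_j$ is the degree-one case, which carries a simple zero at $z=1$, fulfilling condition (a) at the non-isolated boundary point $1$. Corollary \ref{posledica1} then yields $B=\tau_0+\sum_{j=1}^s\tau_j b_j$ with $\tau_j\in\sum M_n(\CC[z])^2$.

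Finally I would transport back. For $C(z)=\sum_k C_k z^k\in M_n(\CC[z])$ of degree $\leq d$, the matrix polynomial $\widehat C(x):=\sum_k C_k(x-w)^k(x-\overline w)^{d-k}=(x-\overline w)^d C(\varphi(x))\in M_n(\CC[x])$ satisfies $\widehat C(x)^\ast\widehat C(x)=\delta(x)^d\,C(\varphi(x))^\ast C(\varphi(x))$; since also $\delta(x)=(x-w)^\ast(x-w)$ is a hermitian square, it follows that for all large $d$ one has $\delta(x)^d\tau_j(\varphi(x))\in\sum M_n(\CC[x])^2$ for every $j$. Substituting $z=\varphi(x)$ in $B=\tau_0+\sum\tau_j b_j$ and multiplying through by $\delta(x)^{d+1}$ turns the left side into $(4(\im w)^2)^N\delta(x)^{\,d+1-N}F(x)$ and the right side into $\delta(x)\cdot\big(\delta(x)^d\tau_0(\varphi(x))\big)+\sum_{j=1}^s\big(\delta(x)^d\tau_j(\varphi(x))\big)\cdot 4(\im w)^2 g_j(x)$, which lies in $M^n_S$; dividing by the positive constant $(4(\im w)^2)^N$ gives $\big((x-\overline w)(x-w)\big)^{k_w}F\in M^n_S$ with $k_w:=d+1-N\geq 0$, completing the plan. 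The main obstacle is the third paragraph — checking carefully that the transported description $\cS$ meets the hypotheses of Corollary \ref{posledica1}, especially the case analysis at $1=\varphi(\infty)$, where unboundedness of $K$ is precisely what rules out $1$ being isolated and where the least/greatest-element generator of $S$ provides the simple zero required by condition (a); by contrast the denominator bookkeeping in the last paragraph is routine once one knows that hermitian squares over $\CC[z]$ pull back, up to a power of $\delta$, to hermitian squares over $\CC[x]$.
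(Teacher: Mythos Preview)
Your argument is correct and follows essentially the same route as the paper: transport $F$ to the circle via the M\"obius map $\lambda_{1,w}$, apply Corollary~\ref{posledica1} to the transported description, and pull the resulting representation back, the power of $(x-\overline w)(x-w)$ arising from clearing denominators. You supply considerably more detail than the paper does---in particular the verification that the transported set $\cS$ satisfies conditions (a)--(b) at the special point $z=1=\varphi(\infty)$, and the explicit bookkeeping $\widehat C^\ast\widehat C=\delta^d\,C(\varphi)^\ast C(\varphi)$---whereas the paper simply asserts these steps.

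One small slip: the claim ``$\deg F=2N$ (even, since $F\succeq 0$ on the unbounded set $K$)'' is false in general; e.g.\ $F=\mathrm{diag}(1,x)$ is positive semidefinite on $[0,\infty)$ with $\deg F=1$. The paper handles this by using $N=\lceil\deg F/2\rceil$ (so that $(z-1)^{2N}$ still clears all denominators), and with this choice one gets $B(1)=0$ in the odd-degree case, which is trivially $\succeq 0$. Nothing else in your argument changes.
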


To prove Corollary \ref{glavni-2} we need some preliminaries.  M\"obius transformations that map $\RR\cup \{\infty\}$ 
bijectively into $\TT$ are exactly the maps of the form
	$$\lambda_{z_0,w_0}:\RR \cup \{\infty\} \to \TT,\quad \lambda_{z_0,w_0}(x):=
		z_0\frac{x-w_0}{x-\overline{w_0}},$$
where $z_0\in \TT$ and $w_0\in \CC\setminus \RR$. 
Notice that $\lambda_{z_0,w_0}^{-1}(x)=\frac{z\overline{w_0}-z_0w_0}{z-z_0}$.
If $F(x)$ is a matrix polynomial from $M_n(\CC[x])$, then 
$$\Lambda_{z_0,w_0,F}(z):=	((z-z_0)^\ast (z-z_0))^{\left\lceil \frac{\deg(F)}{2}\right\rceil} \cdot 
		F\left(\lambda^{-1}_{z_0,w_0}(z)	
			\right)$$
is a matrix polynomial from $M_n(\CC[z,\frac{1}{z}])$. Observe that 
$$F(x)=\left(\frac{(x-\overline{w_0})(x-w_0)}{4\cdot \Im(w_0)^2}\right)^{
		\left\lceil \frac{\deg(F)}{2}\right\rceil}\cdot
		\Lambda_{z_0,w_0,F}(\lambda_{z_0,w_0}(x)),$$
where $\Im(w_0)$ is the imaginary part of $w_0$.

\begin{proof}[Proof of Corollary \ref{glavni-2}]
	The non-trivial direction is $1.\Rightarrow 2.$ Choose $w_0\in \CC\setminus \RR$.	
	Observe that $\Lambda_{1,w_0,F}(z)$ belongs to the set $\Pos_{\succeq 0}^n(\cK_{w_0})$ where
	$\cK_{w_0}:=\Cl\left(\lambda_{1,w_0}(K)\right)$ and $\Cl(\cdot)$ is the closure operator. 
Let $S=\{g_1,\ldots,g_s\}$ be the natural description of $K$. Then
$\cS:=\{\Lambda_{1,w_0,g_1}(z),\ldots,\Lambda_{1,w_0,g_s}(z)\}$ satisfies the conditions of
Corollary \ref{posledica1} and hence $\Lambda_{1,w_0,F}\in \cM^n_{\cS}$.
Therefore
		$$\left(\frac{(x-\overline{w_0})(x-w_0)}{4\cdot\im(w_0)^2}\right)^{k_{w_0}} \cdot F(x)\in M^n_S,$$
	where $k_{w_0}\in \NN\cup\{0\}$ equals $k-\left\lceil \frac{\deg(F)}{2}\right\rceil$
	with $k$ being the degree of the summand of the highest degree in the 
	expression of	$\Lambda_{1,w_0,F}(z)$ as the element of $\cM^n_{\cS}$.
\end{proof}

\begin{remark}\label{preostali}
	By a similar but more technical proof we can show, that Corollary \ref{glavni-2}.2 is true for all 
	$w\in \CC\setminus K$, i.e., it is true also for $w\in \RR\setminus K$.
\end{remark}

\vspace{10 pt}
\noindent \textbf{Acknowledgment.}
	I would like to thank to my advisor Jaka Cimpri\v c for proposing the problem,
many helpful suggestions and the help in establishing Claim 2 of 
Proposition \ref{trditev-protiprimer-interval-in-poltrak}.

	I am also very grateful to the anonymous referee for a detailed reading of the previous and final versions of the manuscript and many suggestions for improvements.

\section*{References}

\end{document}